\newtheorem{theorem}{Theorem}
\newtheorem{assumption}[theorem]{Assumption}
\newtheorem{definition}[theorem]{Definition}
\newtheorem{example}[theorem]{Example}
\newtheorem{lemma}[theorem]{Lemma}
\newtheorem{remark}[theorem]{Remark}
\begin{document}
\title{Strong Law of Large Numbers for  branching diffusions}
\author{J\'anos Engl\"ander, Simon C. Harris and Andreas E. Kyprianou}
\date{\today}
\address{Department of Statistics and Applied Probability\\
University of California, Santa Barbara, CA 93106-3110, USA}
\email{englander@pstat.ucsb.edu}\address{Department of Mathematical Sciences, University
of Bath, Bath, BA2 7AY, UK}\email{S.C.Harris@bath.ac.uk}\address{Department of Mathematical Sciences, University of Bath, Bath, BA2 7AY, UK}\email{a.kyprianou@bath.ac.uk}

\keywords{Law of Large Numbers, spine decomposition, spatial branching processes, branching diffusions, measure-valued processes, $h$-transform, criticality, product-criticality} \subjclass{Primary:
60J60; Secondary: 60J80}

\begin{abstract}
Let $X$ be the branching particle diffusion corresponding to the operator $Lu+\beta (u^{2}-u)$ on $D\subseteq \mathbb{R}^{d}$ (where $\beta  \geq 0$ and $\beta\not\equiv 0$). Let $\lambda _{c}$ denote the generalized principal eigenvalue for the operator $L+\beta $ on $D$ and assume that it is finite. When $\lambda _{c}>0$ and $L+\beta-\lambda _{c}$ satisfies certain spectral theoretical conditions, we prove that the random measure $\exp \{-\lambda _{c}t\}X_{t}$ converges almost surely in the vague topology as $t$ tends to infinity. This result is motivated by a cluster of articles due to Asmussen and Hering dating from the mid-seventies as well as the more recent work concerning analogous results  for superdiffusions of  \cite{ET,EW}. We extend significantly the results in \cite{AH76,AH77} and include some key examples of the branching process literature. As far as the proofs are concerned, we appeal to modern techniques concerning martingales and `spine' decompositions or `immortal particle pictures'.
\end{abstract}
\maketitle
\tableofcontents
\section{Introduction and statement of results}
\subsection{Model}

Write $C^{i,\eta }(D)$ to denote the space of $i$ times ($i=1,2$) continuously differentiable functions with all their $i$th order derivatives belonging to $C^{\eta }(D)$. [Here $C^{\eta }(D)$ denotes the usual
H\"{o}lder space.] Let $D\subseteq \mathbb{R}^{d}$ be a domain and consider $Y=\{Y_t;\,t\geq 0\}$, the diffusion process with probabilities $\{\mathbb{P}{_{x}},\ x\in D\}$ corresponding to the operator
\begin{equation}
L\;=\;\frac{1}{2}\nabla \!\cdot \!a\nabla \,+\,b\!\cdot \!\nabla \quad \text{%
on}\ \;\mathbb{R}^{d},  \label{notation.L}
\end{equation}
where the coefficients $a_{i,j}$ and $b_{i}$ belong to $\,C^{1,\eta },$ $%
\,i,j=1,...,d,$\thinspace\ for some $\eta $ in $(0,1],$ and the symmetric
matrix $a=\{a_{i,j}\}$ is positive definite for all $x\in D$. At this point, we do not
assume that $Y$ is conservative, i.e. $Y$ may get killed at the Euclidean
boundary of $D$ or run out to infinity in finite time.

Furthermore let us first assume that $0\leq \beta \in C^{\eta }(D)$ is bounded from above on $D$ and $\beta
\not\equiv 0$. The (strictly dyadic) \emph{$(L,\beta ;D)$-branching diffusion} is the
Markov process with motion component $Y$ and with spatially dependent rate $%
\beta $, replacing particles by precisely two offspring when branching and
starting from a finite configuration of individuals. At each time $t>0$, the
state of the process is denoted by $X_{t}\in \mathcal{M}\left( D\right) $ where
\begin{equation*}
\mathcal{M}\left( D\right) =\left\{ \sum_{i=1}^{n}\delta _{x_{i}}:n\in
\mathbb{N}\text{ and }x_{i}\in D\text{ for }i=1,...,n\right\} .
\end{equation*}
We will also use the following notation: $X=\{X_t : t\geq 0\}$ has probabilities $\{P_{\mu }:\mu \in
\mathcal{M}\left( D\right) \}$, and $E_{\mu }$ is expectation with
respect to $P_{\mu }.$ As usual,
$\langle f,\mu \rangle :=\int_{D}f(x)\,\mu (\mathrm{d}x)$ and $\langle f,g \rangle :=\int_{D}f(x)g(x)\, \mathrm{d}x$, where $\mathrm{d}x$ is Lebesgue measure, and so
$\langle f,g\mathrm{d}x \rangle=\langle fg, \mathrm{d}x\rangle=\langle f,g \rangle$.

When $\beta$ is not bounded from above, one may wonder if the $(L,\beta ;D)$-branching diffusion is still well defined, in particular, whether the global (or even local) mass may blow up in finite time. To treat the case with $\beta$'s which are not upper bounded we will need  to consider more general branching diffusions. For a  `\emph{weighted  branching diffusion}', the particles do not necessarily carry unit mass. At each time $t>0$, the state of the process is  $\widehat X_{t}\in \widehat{\mathcal{M}}\left( D\right)$ where
\begin{equation*}
\widehat{\mathcal{M}}\left( D\right) =\left\{ \sum_{i=1}^{n}\gamma_i\delta _{x_{i}}:n\in
\mathbb{N}, \gamma_i>0\text{ and }x_{i}\in D\text{ for }i=1,...,n\right\}.
\end{equation*}
Next we need a definition. Let
\begin{equation*}
\lambda _{c}=\lambda _{c}(L+\beta ,D):=\inf \{\lambda \in \mathbb{R}\ :\
\exists u>0\ \text{satisfying}\ (L+\beta -\lambda )u=0\ \text{in}\ D\}
\end{equation*}
denote the \textit{generalized principal eigenvalue }for $L+\beta $
on $D$. By standard theory, $\lambda_c<\infty$ whenever $\beta$ is upper bounded and, for general $\beta$,  there exists an $h>0$ satisfying that $(L+\beta -\lambda )h=0$, whenever $\lambda_c<\infty$.

In the latter case, let us define  the $\widehat{\mathcal{M}}\left( D\right)$-valued process $W$ as follows. Each particle performs a $Y$ motion and carries weight $h(x)$ at $x\in D$, and furthermore, when the particle's clock rings, according to the spatially varying rate $\beta$, the particle splits into two offspring, which perform independent $Y$ motions and carry weights according to the function $h$, etc. Since $\beta$ is not upper bounded, we have to rule out finite time explosions. Fortunately, since $h$ is a `harmonic' function, it is a straightforward exercise to show that the total mass process $|W|$ is a supermartingale, and in particular, it is a.s. finite for all $t>0$.
Then, since $\widehat X$ is well defined, so is the $\mathcal{M}\left( D\right)$-valued process $X$ defined by
$\frac{\mathrm{d} X_t}{\mathrm{d} \widehat X_t}:=e^{\lambda_c t}h^{-1},$ (i.e. $X_t(B):=e^{\lambda_c t}\langle h^{-1}\mathbf{1}_{B},\widehat X_t\rangle,\ t\ge 0,\ B\subset D$ Borel).

Therefore, from now on, \emph{we relax the assumption that $\sup_D \beta<\infty$ and replace it with the much milder assumption $\lambda_c<\infty$}.

\subsection{Motivation}

This paper concerns growth of mass on compact domains of branching particle
diffusions. In doing so we address a gap in the literature dating back to
the mid-seventies when the study of growth of typed branching processes on
compact domains of the type space was popularized by Asmussen and Hering.
Also we complement a recent revival in this field which has appeared amongst
the superprocess community.

Before discussing main results, we shall introduce the topic in detail.

\begin{definition}[Local extinction]
\label{le} Fix $\mu \in \mathcal{M}\left( D\right)$. We say that $X$
exhibits local extinction under $P_{\mu }$ if for every Borel set $B\subset
\subset D$, there exists a random time $\tau _{B}$ such that
\begin{equation*}
P_{\mu }(\tau _{B}<\infty )=1\mbox{ and }P_{\mu }(X_{t}(B)=0\ for\ all\
t\geq \tau _{B})=1.
\end{equation*}
[Here $B\subset \subset D$ means that $B$ is bounded and its closure is a
subset of $D$.]
\end{definition}

Local extinction has been studied by \cite{P96}, \cite{EP} (for superprocesses) and
\cite{EK} (for branching diffusions). To
explain their results, recall that
we assume that the generalized principal eigenvalue for $L+\beta $
on $D$ is finite.
In fact, $\lambda _{c}\leq 0$ if and only if there exists
a function $h>0$ satisfying $(L+\beta )h=0$ on $D$ -- see Section
4.4 in \cite{P95}. Following the papers \cite{P96,EP} where similar issues were addressed for superprocesses, in
\cite{EK}  the following was shown.

\begin{theorem}[Local extinction versus local exponential growth]
\label{main-theorem} Let $\mathbf{0}\neq\mu \in \mathcal{M}\left( D\right) $.

\begin{itemize}
\item[(i)]  $X$ under $P_{\mu }$ exhibits local extinction if and only if
there exists a function $h>0$ satisfying $(L+\beta )h=0$ on $D$, that is, if
and only if $\lambda _{c}\leq 0$.

\item[(ii)]  When $\lambda _{c}>0$, for any $\lambda <\lambda _{c}$ and $%
\emptyset \neq B\subset \subset D$,
\begin{eqnarray*}
P_{\mu }(\limsup\nolimits_{t\uparrow \infty }e^{-\lambda t}X_{t}(B)=\infty
)>0, \\
P_{\mu }(\limsup\nolimits_{t\uparrow \infty }e^{-\lambda
_{c}t}X_{t}(B)<\infty )=1.
\end{eqnarray*}
\end{itemize}
In particular, local extinction/local exponential growth does not depend on the initial measure $\mathbf{0}\neq\mu \in \mathcal{M}\left( D\right) $.
\end{theorem}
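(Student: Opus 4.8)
The plan is to reduce the almost-sure statements to the analysis of a few non-negative martingales built from the first-moment semigroup, together with Pinsky's criticality theory for $L+\beta$ on $D$. First I would record the first-moment identity $E_\mu[\langle f,X_t\rangle]=\langle\mu,S_tf\rangle$, where $(S_t)_{t\ge 0}$ is the Feynman--Kac semigroup of $L+\beta$ on $D$, $S_tf(x)=\mathbb{E}_x[\exp\{\int_0^t\beta(Y_s)\,\mathrm ds\}\,f(Y_t)\,\mathbf 1_{\{t<\tau_D\}}]$; this is immediate from the branching property. From the same branching property, together with the fact that $S_th=\mathrm e^{\lambda t}h$ whenever $h>0$ solves $(L+\beta-\lambda)h=0$, one gets the key structural fact that $\mathrm e^{-\lambda t}\langle h,X_t\rangle$ is then a non-negative $P_\mu$-martingale. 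From criticality theory (as quoted from \cite{P95}) I import: $\lambda_c\le 0$ iff there is $h>0$ with $(L+\beta)h=0$; for every $\lambda\ge\lambda_c$ there is $h>0$ with $(L+\beta-\lambda)h=0$; $\lambda_c(L+\beta,D)=\lim_n\lambda_c(L+\beta,D_n)$ increasingly along smooth bounded $D_n\uparrow D$; and on a bounded smooth subdomain $D'$, $L+\beta$ with Dirichlet conditions has a simple principal eigenvalue $\lambda_c(L+\beta,D')$ with a strictly positive eigenfunction and a spectral gap.

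For the upper bound in (ii), fix (using $\lambda_c<\infty$) some $h_c>0$ with $(L+\beta-\lambda_c)h_c=0$. Then $W_t:=\mathrm e^{-\lambda_ct}\langle h_c,X_t\rangle$ is a non-negative martingale, hence converges $P_\mu$-a.s.\ to a finite limit. Since $h_c$ is continuous and strictly positive and $\bar B$ is a compact subset of $D$, $\delta:=\inf_{\bar B}h_c>0$, so $\mathrm e^{-\lambda_ct}X_t(B)\le\delta^{-1}W_t$ is a.s.\ bounded, which is the second display of (ii). The same computation, now with $\lambda_c<0$, will reappear in part (i).

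For the lower bound in (ii), fix $\lambda<\lambda_c$. Using the exhaustion property, choose a bounded smooth $D'$ with $\bar B\subset D'\subset\subset D$ and $\lambda':=\lambda_c(L+\beta,D')>\max(0,\lambda)$, which is possible because $\lambda_c(L+\beta,D_n)\uparrow\lambda_c>\max(0,\lambda)$. Let $\phi>0$ on $D'$, $\phi\in C(\bar{D'})$, $\phi|_{\partial D'}=0$, be the principal Dirichlet eigenfunction, $(L+\beta)\phi=\lambda'\phi$, and let $X^{D'}$ be $X$ with particles killed on first exit from $D'$; it is a branching diffusion on a bounded smooth domain, where $\beta$ is bounded, so all second moments are finite. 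Then $\widehat W_t:=\mathrm e^{-\lambda't}\langle\phi,X^{D'}_t\rangle$ is a non-negative martingale with limit $\widehat W_\infty$; by the spectral gap the normalized semigroup $\mathrm e^{-\lambda't}S^{D'}_t$ converges exponentially to the projection onto $\phi$, so a standard second-moment argument (in the spirit of Asmussen--Hering) shows $\widehat W$ is $L^2$-bounded, hence uniformly integrable with $\mathbb{E}_{\delta_x}[\widehat W_\infty]=\phi(x)>0$ for $x\in D'$, and moreover $\mathrm e^{-\lambda't}\langle g,X^{D'}_t\rangle\to c_g\,\widehat W_\infty$ a.s.\ for bounded $g$, with $c_{\mathbf 1_B}>0$ since $\mathbf 1_B$ is not a.e.\ zero. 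Consequently, on $\{\widehat W_\infty>0\}$ (which has positive $P_{\delta_x}$-probability for $x\in D'$), $\mathrm e^{-\lambda t}X_t(B)\ge\mathrm e^{-\lambda t}X^{D'}_t(B)=\mathrm e^{(\lambda'-\lambda)t}\,\mathrm e^{-\lambda't}X^{D'}_t(B)\to\infty$. Finally, for arbitrary $\mathbf 0\ne\mu$, irreducibility of $Y$ (valid under the regularity hypotheses) and $\beta\not\equiv 0$ give positive probability that some descendant of a particle of $\mu$ sits in a small ball inside $D'$ at time $1$; applying the above to the sub-population it founds and using the Markov and branching properties yields $P_\mu(\limsup_t\mathrm e^{-\lambda t}X_t(B)=\infty)>0$.

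Finally, part (i). The equivalence $\lambda_c\le 0\iff\exists h>0:(L+\beta)h=0$ is quoted from \cite{P95}. The implication ``local extinction $\Rightarrow\lambda_c\le 0$'' is the contrapositive of the lower bound just proved: if $\lambda_c>0$, pick $\lambda\in(0,\lambda_c)$; then $\mathrm e^{-\lambda t}X_t(B)$ is unbounded with positive probability, so $X_t(B)\ge 1$ for arbitrarily large $t$ with positive probability, contradicting local extinction. For the converse, suppose $\lambda_c\le 0$ and fix $h_c>0$ with $(L+\beta-\lambda_c)h_c=0$. If $\lambda_c<0$, then $\langle h_c,X_t\rangle=\mathrm e^{\lambda_ct}W_t\to 0$ a.s.\ since $W_t$ converges; as $X_t(B)\le(\inf_{\bar B}h_c)^{-1}\langle h_c,X_t\rangle$ is integer-valued it is eventually $0$, with an a.s.\ finite $\tau_B$, i.e.\ local extinction holds. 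The borderline case $\lambda_c=0$ is handled by Doob's $h$-transform of the motion: the generator becomes $L_0^h:=h^{-1}(L+\beta)(h\,\cdot\,)$, a diffusion generator with no zeroth-order term, and $X$ is re-expressed as a branching diffusion driven by the diffusion $Y^h$; criticality theory identifies $Y^h$ as either transient or the recurrent ground-state transform of a critical operator, and in either case a spine decomposition of $\langle h,X_t\rangle$ shows that the $h$-weighted mass either tends to $0$ (a non-uniformly-integrable critical martingale) or escapes every compact, so that $X$ vacates every $B\subset\subset D$, which is again local extinction. The main obstacle throughout is the lower bound in (ii): proving non-degeneracy of the auxiliary-domain martingale (equivalently, the strong law on a bounded smooth domain, via second moments or a spine change of measure) and transporting the resulting exponential growth from $D'$ back to the prescribed set $B$ for the unrestricted process; within (i) only the critical case $\lambda_c=0$ is delicate, and it is closed by the same spine/$h$-transform circle of ideas.
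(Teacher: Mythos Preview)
The paper does not prove this theorem. It is stated in Section~1.2 as a known result, with the sentence ``Following the papers \cite{P96,EP} where similar issues were addressed for superprocesses, in \cite{EK} the following was shown,'' and the parenthetical remark after the theorem simply notes that the proofs in \cite{EK} carry over when the assumption $\sup_D\beta<\infty$ is relaxed to $\lambda_c<\infty$. There is therefore no proof in the paper against which to compare your attempt.

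That said, your outline is in the spirit of the arguments in the cited reference \cite{EK}: the upper bound via the non-negative martingale $\mathrm e^{-\lambda_ct}\langle h_c,X_t\rangle$, the lower bound via restriction to a bounded smooth subdomain where the Dirichlet principal eigenvalue exceeds $\lambda$ and a second-moment (or spine) argument gives non-degeneracy of the associated martingale, and the dichotomy in~(i) via the $h$-transform and criticality theory from \cite{P95}. The one place your sketch is genuinely thin is the borderline case $\lambda_c=0$ in~(i): the assertion that the $h$-weighted mass ``either tends to $0$ \ldots\ or escapes every compact'' is precisely the content that needs work, and in \cite{EK} this is handled carefully via the spine change of measure rather than by a one-line appeal. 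But as far as comparison with the present paper goes, there is nothing to compare: the theorem is quoted, not proved.
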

(In \cite{EK} it is assumed that $\beta$ is upper bounded, whereas in \cite{EP} only the finiteness of $\lambda_c$ is assumed. The proofs of \cite{EK} go through for this latter case too.)
On closer inspection this last theorem says that when $\lambda _{c}\leq 0$
mass `escapes out of $B$' even though the entire process may survive with
positive probability. (If $Y\;$is conservative in $D$ for example then it
survives with probability one). Further, when $\lambda _{c}>0$ mass
accumulates on all nonempty bounded open domains and in such a way that with
positive probability this accumulation grows faster than any exponential
rate $\lambda <\lambda _{c}.$ On the other hand, mass will not grow faster
than at the exponential rate $\lambda _{c}.$ It is natural then to ask
whether in fact $\lambda _{c}$ gives an exact growth rate or not. That is to
say, for each $\emptyset \neq B\subset \subset D$ do the random measures $%
\{\exp \{-\lambda _{c}t\}X_{t}:t\geq 0\}$ converge in the vague topology
almost surely? Further, can one identify the limit? This is precisely the object of interest of a variety of previous studies for both branching diffusions and
superprocesses which we shall now review.

We note already here that the process in expectation is given by the \textit{%
linear kernel} corresponding to the operator $L+\beta $ on $D$. Therefore,
trusting in SLLN for branching processes, one should expect that the process
itself grows like the linear kernel too. On the other hand, it is easy to
see that the linear kernel does not in general scale precisely with $\exp
\{-\lambda _{c}t\}$ but rather with $f(t)\exp \{-\lambda _{c}t\}$, where $f$
grows to infinity as $t\rightarrow \infty $ and at the same time  is subexponential. (Take for
example $L=\Delta /2$ and $\beta >0$ constant on $\mathbb{R}^{d}$, then $%
f(t)=t^{d/2}$.) In fact the growth is pure exponential if and only if $%
L+\beta $ is product-critical (see later in this subsection). Proving SLLN seems to be
significantly harder in the general case involving the
subexponential term $f$.

In the late seventies Asmussen and Hering wrote a series of papers
concerning weak and strong laws of large numbers for a reasonably
general class of branching processes which included branching
diffusions. See \cite{AH76} and \cite{AH77}. In the context of the branching diffusions we consider here one can summarize briefly their achievements by saying that, when $D$ is \emph{bounded}, for a special class of
 operators $L+\beta $, the rescaled process $\{\exp
\{-\lambda _{c}t\}X_{t}:t\geq 0\}$ converges in the vague topology,
almost surely for branching diffusions.
Further, for the same class of $L+\beta $ when $D$ is unbounded they
proved that  there exists the limit \emph{in probability} of $\exp
\{-\lambda _{c}t\}X_{t} $ as $t\uparrow \infty$ (in the vague topology).
The class
of $L+\beta $ alluded to they called `positively regular'. The latter is
a subclass of the class $\mathcal{P}^*_p(D)$  (the class that we shall work with) given below.

A more detailed comparison with \cite{AH76, AH77} as well as the discussion on related results on superprocesses  is deferred to Section \ref{detailed}.

Before we give the definition of the basic classes of operators that we shall use, $\mathcal{P}_p(D)$ and $\mathcal{P}^*_p(D)$,  we need to recall certain concepts of
the so-called \textit{criticality theory} of second order operators.
The operator $L+\beta -\lambda _{c}$ is called \textit{critical} if
the associated space of positive harmonic functions is nonempty but
the operator does not possess a (minimal positive) Green's function.
In this case the space of positive harmonic functions is in fact
one-dimensional. Moreover, the space of positive harmonic functions
of the adjoint of $L+\beta -\lambda _{c}$ is also one dimensional.
\begin{assumption}\rm
Suppose we choose representatives of these two spaces to be $\phi $ and $%
\widetilde{\phi }$ respectively. Throughout the paper and without
further reference,  we will always assume that
$L+\beta -\lambda _{c}$ product-critical, and in this case we pick
$\phi $ and $\widetilde{\phi }$ with the normalization $\langle\phi,
\widetilde{\phi }\rangle=1$.
\end{assumption}
We now define the classes $\mathcal{P}_p\left( D\right)$ and $\mathcal{P}^*_p\left( D\right)$. Since we want to talk about spatial spread on a generic domain $D$, we fix, for the rest of the paper, an arbitrary family of domains  $\{D_t,\ t\ge 0\}$ with $D_t\subset\subset D,\ D_t\uparrow D$. (For $D=\mathbb R^d$, $D_t$ can be the $t$-ball, but we can take any other family with $D_t\subset\subset D,\ D_t\uparrow D$ too.)
\begin{definition}[$\mathcal{P}_p\left( D\right)$ and $\mathcal{P}^*_p\left( D\right)$]\rm
\label{class} For $p\geq 1$, we write $L+\beta\in\mathcal{P}_p\left( D\right) $
if
\begin{description}
\item[(i)]  $\lambda _{c}=\lambda _{c}(L+\beta ;D)>0,$
\item[(ii)] $\langle\phi^p, \widetilde{\phi }\rangle<\infty$, in which case we say that $L+\beta -\lambda _{c}$ is \emph{product $p$-critical}.
\end{description}

Let  $q(x,y,t)$ be transition density of $L+\beta$ and $
Q(x,y,t):=q(x,y,t)-e^{\lambda_c t}\widetilde{\phi}(y)\phi(x).$
We write $L+\beta\in\mathcal{P}^*_p\left( D\right)$ when the following additional conditions holds.
\begin{description}
\item[(iii)]
 For all $\emptyset\neq B\subset\subset D$ there exists a positive function $\zeta$ with $\zeta(t)\uparrow\infty$ as $t\uparrow\infty$ and such that
\[
|Q(x,y,\zeta(t))|\leq \alpha_t\widetilde{\phi}(y)\phi(x),\  x\in D_{t},\ y\in B,\ t\geq 0,
\]
where $\lim_{t\uparrow\infty}e^{-\lambda_c t}\alpha_t=0$.
\item[(iv)]
Furthermore,  for any given $x\in D$  there exists a  function $a:[0,\infty)\to [0,\infty)$ such that $\zeta(a_t)=\mathcal{O}(t)$ as $t\to\infty$ and  for all $\delta>0$,
$$P_{\delta_x}\left(\exists n_0, \forall n>n_0\ :\ \mathrm{supp}(X_{n\delta})\subset D_{a_{n\delta}}\right)=1.$$

\end{description}
\end{definition}
 Let $p(x,y,t)$ denote the transition density of the diffusion corresponding to the operator $(L+\beta-\lambda_c)^{\phi}$. Then $p(x,y,t)=e^{-\lambda_c t}\phi(y)\phi^{-1}(x)q(x,y,t)$, and thus, (iii) is equivalent to
\begin{description}
\item[(iii*)]With the same $\zeta$  as in (iii),

\[
\lim_{t\to\infty}\sup_{x\in D_{t}, y\in B}\left|\frac{p(x,y,\zeta(t))}{\phi\widetilde{\phi}(y)}-1\right|=0.
\]
\end{description}
Some heuristics to help {find} suitable $a$ and $\zeta$ will be discussed in Section \ref{Examples}.

\begin{remark}[Ergodicity]\rm
Note that criticality is invariant under $h$-transforms. Moreover,
an easy computation shows that $\phi$ and $\tilde\phi$ transforms
into $1$ and $\phi\tilde\phi$ respectively when turning from $\left(
L+\beta -\lambda_{c}\right) $ to the $h$-transformed ($h=\phi$)
operator  $(L+\beta -\lambda _{c})^{\phi }=L+a\phi ^{-1}\nabla
\phi \cdot \nabla .$ Therefore
product criticality is invariant under $h$-transforms too (this is
not the case with product $p$-criticality when $p>1$). Further, for
operators with no zeroth order term, it is equivalent to positive
recurrence (ergodicity) of the corresponding diffusion process. In
particular, $\left( L+\beta -\lambda_{c}\right) ^{\phi }$
corresponds to an ergodic diffusion process provided $\left( L+\beta
-\lambda_{c}\right) $ is product critical (see \cite{P95}, Section
4.9).$\hfill\diamond$
\end{remark}

\subsection{Main results}
With the following theorem we wish to address the issue of almost
sure convergence in the vague topology of $\{\exp \{-\lambda
_{c}t\}X_{t}:t\geq 0\}$ for branching diffusions with $L+\beta $
belonging  to $\mathcal{P}_p^*(D),\ p>1$ thus generalizing the results of Asmussen and Hering.

Note that since $L+\beta -\lambda _{c}$ is  critical, $\phi $ is the
unique (up to constant multiples) {\it invariant positive function}
for the linear semigroup corresponding to $L+\beta -\lambda _{c}$
(Theorem 4.8.6. in \cite{P95}). Let $\{S_t\}_{t\ge 0}$ denote the
semigroup corresponding to $L+\beta$. It is a standard fact (sometimes called `the one particle picture') that
\begin{equation}\label{opp}
S_t (g)(x)=E_{\delta _{x}}\langle g ,X_{t}\rangle
\end{equation}
 for all
nonnegative bounded measurable $g$'s. Even  though $\phi$ is not
necessarily bounded from above, $S_t (\phi )$ makes sense and (\ref{opp}) remains valid when $g$ is replaced by $\phi$, because
$\phi$ can be approximated with a monotone increasing sequence of
$g$'s and the finiteness of the limit is guaranteed precisely by the
invariance property of $\phi$. By the invariance of $\phi$,
$E_{\delta _{x}}e^{-\lambda _{c}t}\langle \phi ,X_{t}\rangle
=e^{-\lambda _{c}t}S_t(\phi) (x)=\phi \left( x\right) $, which is
 sufficient together with the branching
property to deduce that $W^{\phi }=\left\{ W_{t}^{\phi };\,t\geq 0\right\}
$
is a martingale where
\[
W_{t}^{\phi }=e^{-\lambda
_{c}t}
\langle \phi ,X_{t}\rangle
, \ t\geq 0.
\]
Indeed note that
\begin{equation*}
E_{\delta_{x} }\left( e^{-\lambda _{c}(t+s)}\langle \phi ,X_{t+s}\rangle |\mathcal{F%
}_{t}\right) =e^{-\lambda _{c}t}E_{X_{t}}\left( e^{-\lambda _{c}s}\langle
\phi ,X_{s}\rangle \right) =e^{-\lambda _{c}t}\langle \phi ,X_{t}\rangle .
\end{equation*}
Being a positive martingale, $P_{\delta_{x}}$-almost sure convergence is
guaranteed, and the  a.s. martingale limit $W_{\infty }^{\phi }:=\lim_{t\to\infty}W_{t}^{\phi }$ appears in  the following main conclusion.
\begin{theorem}[SLLN]\label{SLLN}Assume that  $L+\beta\in\mathcal{P}_p^*(D)$  for some
$p\in (1,2]$ and  $\langle\beta \phi^p,\widetilde{\phi}\rangle<\infty$.
 Then,
\begin{equation}
\lim\nolimits_{t%
\uparrow \infty }e^{-\lambda _{c}t}
\langle g,X_{t}\rangle
=\langle g,\widetilde{\phi }\rangle W_{\infty }^{\phi }, \ g\in C_{c}^{+}\left( D\right)
\label{strong.law}
\end{equation} holds $P_{\delta_x}-a.s.$ for all $x\in D$,
and $E_{\delta_x}\left(W_{\infty }^{\phi }\right)=1$.

Moreover, if $\sup_{D}\beta<\infty$ then the restriction $p\in (1,2]$ can be replaced by $p>1$.
\end{theorem}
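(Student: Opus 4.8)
The plan is to prove the SLLN by combining a spine (change-of-measure) decomposition with the analytic ergodicity estimates encoded in conditions (iii)/(iii*) and (iv). First I would set up the $h$-transformed picture: using $h=\phi$, pass to the weighted branching diffusion whose spatial motion is the ergodic diffusion associated with $(L+\beta-\lambda_c)^{\phi}$, with stationary density $\phi\widetilde\phi$; in these coordinates the additive martingale $W^{\phi}_t=e^{-\lambda_c t}\langle\phi,X_t\rangle$ becomes the natural total-mass martingale. I would then establish $L^p$-boundedness of $W^{\phi}$ for the given $p\in(1,2]$: via the spine change of measure $\widetilde P$ (size-biasing by $W^{\phi}$), one obtains the spine decomposition $W^{\phi}_t=\sum_{\text{branch times }u\le t}\cdots+\text{(spine term)}$, and a Burkholder--Davis--Gundy / $L^p$ estimate for sums of a martingale with conditionally independent increments reduces $\sup_t\widetilde E\,(W^\phi_t)^{p-1}<\infty$ to the integrability condition $\langle\beta\phi^p,\widetilde\phi\rangle<\infty$ together with $\langle\phi^p,\widetilde\phi\rangle<\infty$ (this is exactly where product $p$-criticality and the extra hypothesis on $\beta$ enter, and why $p\le 2$ is convenient — for $p\le 2$ one has the elementary inequality $|\sum a_i|^p\le\sum|a_i|^p$ replacing BDG, and for $\sup_D\beta<\infty$ a cruder bound lets one push to all $p>1$). $L^p$-boundedness gives uniform integrability, hence $W^\phi_t\to W^\phi_\infty$ in $L^1$ and $E_{\delta_x}W^\phi_\infty=1$.

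Next I would prove the convergence \eqref{strong.law} along the lattice times $t=n\delta$ for fixed $\delta>0$. The idea is to write, for $g\in C_c^+(D)$ supported in a fixed $B\subset\subset D$,
\[
e^{-\lambda_c(n\delta)}\langle g,X_{n\delta}\rangle
=e^{-\lambda_c(n\delta-\zeta(a_{n\delta}))}\Big\langle e^{-\lambda_c\zeta(a_{n\delta})}S_{\zeta(a_{n\delta})}(g)\,,\,X_{n\delta-\zeta(a_{n\delta})}\Big\rangle+\text{error},
\]
i.e. run the branching process up to time $n\delta-\zeta(a_{n\delta})$ and then use the one-particle picture \eqref{opp} to replace each surviving particle's future contribution by its mean. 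Condition (iv) guarantees that at time $n\delta-\zeta(a_{n\delta})$ (roughly $n\delta$ minus something of order $n\delta$, or more precisely arranged so the supports are controlled) all particles lie in $D_{a_{n\delta}}$, so condition (iii) applies uniformly: $e^{-\lambda_c\zeta(a_{n\delta})}S_{\zeta(a_{n\delta})}(g)(x)=\langle g,\widetilde\phi\rangle\phi(x)+O(\alpha_{\cdot}\phi(x))$ uniformly over those $x$, and $e^{-\lambda_c s}\alpha_s\to0$ kills the error once multiplied by the mass. Thus the main term is $\langle g,\widetilde\phi\rangle\,e^{-\lambda_c(n\delta-\zeta(a_{n\delta}))}\langle\phi,X_{n\delta-\zeta(a_{n\delta})}\rangle=\langle g,\widetilde\phi\rangle W^\phi_{n\delta-\zeta(a_{n\delta})}\to\langle g,\widetilde\phi\rangle W^\phi_\infty$. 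The error terms (the fluctuation of $\langle g,X_{n\delta}\rangle$ about its conditional mean given $\mathcal F_{n\delta-\zeta(a_{n\delta})}$, and the exceptional event where supports escape $D_{a_{n\delta}}$) I would control by a second-moment (or $L^p$) estimate plus Borel--Cantelli, again using $\langle g^p\widetilde\phi\rangle<\infty$-type bounds and the decay in (iii); this is where the $\mathcal P^*_p$ hypotheses do their work.

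Finally I would pass from lattice times to continuous time. Fix $g\in C_c^+(D)$ and sandwich $t\in[n\delta,(n+1)\delta]$: writing $h^\pm_\delta$ for suitable super/sub-mean majorants of $g$ obtained by running the semigroup over the at most $\delta$ extra time, one bounds $e^{-\lambda_c t}\langle g,X_t\rangle$ above and below by lattice-time quantities of the form $e^{-\lambda_c n\delta}\langle h^\pm_\delta,X_{n\delta}\rangle$ up to errors, apply the lattice-time result, and then let $\delta\downarrow0$ using continuity of the semigroup and $\langle h^\pm_\delta,\widetilde\phi\rangle\to\langle g,\widetilde\phi\rangle$ as $\delta\to0$. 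Taking a countable convergence-determining family of $g$'s and a null-set union gives vague convergence $P_{\delta_x}$-a.s. for all $x$. The main obstacle I anticipate is the $L^p$-boundedness step: getting a clean spine-decomposition $L^p$ bound that isolates precisely the conditions $\langle\phi^p,\widetilde\phi\rangle<\infty$ and $\langle\beta\phi^p,\widetilde\phi\rangle<\infty$ — and understanding why $p\le2$ is needed without a boundedness assumption on $\beta$ but can be dropped when $\sup_D\beta<\infty$ — is the technical heart; the second-moment control of fluctuations along lattice times is similar in spirit but lighter, and the lattice-to-continuous passage is routine.
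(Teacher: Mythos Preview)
Your overall architecture matches the paper's: spine change of measure for $L^p$-boundedness of $W^\phi$; along lattice times, split $e^{-\lambda_c n\delta}\langle g,X_{n\delta}\rangle$ into a conditional mean at an earlier time (identified via (iii)/(iv) as $\langle g,\widetilde\phi\rangle W^\phi$ plus a vanishing remainder) and a fluctuation controlled by an $L^p$ Borel--Cantelli argument; then a $\delta\downarrow0$ sandwich for continuous time. Two technical points, however, do not go through as you have written them.

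First, the inequality ``$|\sum a_i|^p\le\sum|a_i|^p$ for $p\le2$'' is false for any $p>1$. What the paper actually uses is two different devices: for the $L^p$-bound on $W^\phi$, one works under $\widetilde P$ with the exponent $q=p-1\in(0,1]$ and uses $(\sum u_i)^q\le\sum u_i^q$ on the spine decomposition, which is how the conditions $\langle\phi^p,\widetilde\phi\rangle<\infty$ and $\langle\beta\phi^p,\widetilde\phi\rangle<\infty$ emerge; for the lattice-time fluctuation, one uses the martingale-difference bound $E|\sum Z_i|^p\le 2^p\sum E|Z_i|^p$ valid for independent centered $Z_i$ and $p\in(1,2]$. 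The reduction of the case $\sup_D\beta<\infty$, $p>1$ to $p\in(1,2]$ is simply that boundedness of $\beta$ lets one replace $p$ by any smaller exponent.

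Second, the lattice-to-continuous step is not routine in the way you describe. ``Semigroup majorants'' $h_\delta^\pm$ control conditional expectations, not the path $t\mapsto\langle g,X_t\rangle$ on $[n\delta,(n+1)\delta]$, so they do not yield a pathwise sandwich. The paper's device (following Asmussen--Hering) is to obtain a pathwise \emph{lower} bound
\[
e^{-\lambda_c t}\langle\phi|_B,X_t\rangle\ \ge\ \frac{e^{-\delta}}{1+\epsilon}\sum_{i=1}^{N_{n\delta}} e^{-\lambda_c n\delta}\phi(X_i)\,\Xi^{\delta,\epsilon}_B(X_i),\qquad t\in[n\delta,(n+1)\delta],
\]
where $\Xi^{\delta,\epsilon}_B(X_i)$ is the indicator that the entire subtree rooted at $X_i$ stays inside $\{y\in B:\phi(y)>(1+\epsilon)^{-1}\phi(X_i)\}$ during $[0,\delta]$; a second Borel--Cantelli argument (structurally identical to your fluctuation step) replaces the indicators by their means, the lattice result gives the liminf, and then the limsup is obtained by applying the liminf to $D\setminus B$ and subtracting from $W^\phi_\infty$. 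You will need something of this pathwise kind rather than a mean-value sandwich.
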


We close this subsection with the Weak Law of Large Numbers. Here we change the class $\mathcal{P}_p^*(D)$ to the larger class $\mathcal{P}_p(D)$ and get convergence in probability instead of a.s. convergence. It is important to point out, however, that  the class $\mathcal{P}_p^*(D)$ is already quite large --- see Section \ref{Examples}, where we verify that key examples from the literature are in fact in $\mathcal{P}_p^*(D)$ and thus obey the SLLN.
\begin{theorem}[WLLN]\label{WLLN}
Suppose that $L+\beta\in\mathcal{P}_p(D)$ for some $p\in(1,2]$ and $\langle\beta\phi^p,\widetilde{\phi}\rangle<\infty$. Then for all $x\in D$,
(\ref{strong.law}) holds
in $P_{\delta_x}-$probability and $E_{\delta_x}(W^\phi_\infty)=1$. Moreover, if $\sup_D\beta<\infty$ then the restriction $p\in(1,2]$ can be replaced by $p>1$.
\end{theorem}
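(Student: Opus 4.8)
The plan is to derive the Weak Law from the Strong Law machinery but take advantage of the fact that convergence in probability is a much weaker conclusion, so that the spatial spread hypothesis (iv) and the uniform kernel estimate (iii) in the definition of $\mathcal{P}_p^*(D)$ can be dispensed with in favour of just product $p$-criticality $\langle \phi^p,\widetilde\phi\rangle<\infty$ plus $\langle\beta\phi^p,\widetilde\phi\rangle<\infty$. First I would fix $g\in C_c^+(D)$ and $x\in D$ and write $e^{-\lambda_c t}\langle g,X_t\rangle = e^{-\lambda_c t}\langle g,X_t\rangle$. The natural route is to compare with the spine/many-to-one decomposition: under the $\phi$-martingale change of measure $\mathrm{d}\widetilde P/\mathrm{d}P_{\delta_x}|_{\mathcal F_t}=W_t^\phi/\phi(x)$, the process develops a distinguished ergodic spine moving according to $(L+\beta-\lambda_c)^\phi$, off which unbiased copies of $X$ immigrate at rate $2\beta(\cdot)$ along the spine. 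This yields the spine decomposition of $e^{-\lambda_c t}\langle g,X_t\rangle$ as a sum over immigration times of rescaled masses of independent subtrees, plus the spine's own contribution which is negligible.

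Next I would compute the first two moments. The expectation is already pinned down in the excerpt: $E_{\delta_x}(e^{-\lambda_c t}\langle g,X_t\rangle)=e^{-\lambda_c t}S_t(g)(x)\to \langle g,\widetilde\phi\rangle\phi(x)$ using that $L+\beta-\lambda_c$ is (product-)critical so that $e^{-\lambda_c t}q(x,y,t)\to \phi(x)\widetilde\phi(y)$ and dominated/monotone convergence against the compactly supported $g$; note no uniformity in $x$ is needed here since $x$ is fixed. For the variance, the key is a second-moment bound: using the standard second-moment formula for $(L,\beta;D)$-branching diffusions,
\begin{equation*}
E_{\delta_x}\langle g,X_t\rangle^2 = S_t(g^2)(x) + 2\int_0^t S_s\big(\beta\,(S_{t-s}g)^2\big)(x)\,\mathrm{d}s,
\end{equation*}
one estimates $e^{-2\lambda_c t}$ times each term. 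The point of the hypothesis $p\le 2$ together with $\langle\phi^p,\widetilde\phi\rangle<\infty$ and $\langle\beta\phi^p,\widetilde\phi\rangle<\infty$ is to control $S_{t-s}g$ by (a constant times) $e^{\lambda_c(t-s)}\phi$ near $s=t$ and to get integrability at the other end via the ergodic convergence $e^{-\lambda_c r}S_r(\cdot)\to\langle\cdot,\widetilde\phi\rangle\phi$; one shows $\limsup_t e^{-2\lambda_c t}E_{\delta_x}\langle g,X_t\rangle^2<\infty$, indeed that it converges. Comparing with $E_{\delta_x}\langle \phi,X_t\rangle^2$, whose $L^2$-convergence of $W_t^\phi$ is the product-$p$-criticality input, one gets that $e^{-\lambda_c t}\langle g,X_t\rangle - \langle g,\widetilde\phi\rangle W_t^\phi$ has variance tending to $0$: the cross terms and the $\langle g,\widetilde\phi\rangle^2$-term match in the limit. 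Since $W_t^\phi\to W_\infty^\phi$ in $L^2$ hence in probability, $L^2$-convergence of $e^{-\lambda_c t}\langle g,X_t\rangle$ to $\langle g,\widetilde\phi\rangle W_\infty^\phi$ follows, which gives convergence in probability and also $E_{\delta_x}(W_\infty^\phi)=1$ by uniform integrability of $W_t^\phi$.

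For the extension to all $p>1$ when $\sup_D\beta<\infty$: here one cannot use $L^2$ arguments directly, so I would instead run a truncation / interpolation argument. One splits $X_t$ (via the many-to-few formula or a direct coupling) into a part carrying $\langle\phi,X_t\rangle$ bounded by a level and a remainder, uses the $L^p$-bounded martingale $W_t^\phi$ (product $p$-criticality with $p>1$ gives $\sup_t E(W_t^\phi)^p<\infty$ via a Burkholder/spine argument as in the SLLN proof) to control the tail, and applies the $p=2$-type estimate to the bounded part after a further localization. The boundedness of $\beta$ is what makes the branching mechanism's contribution to the $p$-th moment comparable across scales. Alternatively, and more cleanly, one observes that convergence in probability of $e^{-\lambda_c t}\langle g,X_t\rangle$ along the lattice $t\in\delta\mathbb N$ plus a Markov-inequality argument between lattice times (using the supermartingale-type control of local mass, which does not need (iv)) upgrades to convergence in probability along all $t$. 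The main obstacle I anticipate is the second-moment estimate: showing that $e^{-2\lambda_c t}\int_0^t S_s(\beta(S_{t-s}g)^2)(x)\,\mathrm{d}s$ converges (not merely stays bounded) and identifying its limit so that it cancels against the corresponding piece of $\langle g,\widetilde\phi\rangle^2 E(W_t^\phi)^2$ — this requires a careful dominated-convergence argument splitting the $s$-integral into $[0,(1-\varepsilon)t]$ (where $e^{-\lambda_c s}S_s\to\langle\cdot,\widetilde\phi\rangle\phi$ handles the $s$-variable and $e^{-2\lambda_c(t-s)}(S_{t-s}g)^2\approx\langle g,\widetilde\phi\rangle^2\phi^2$ handles the rest, with $\langle\beta\phi^p,\widetilde\phi\rangle<\infty$ and $p\le2$, i.e.\ $\phi^2\le$ const$\cdot(1+\phi^p)$, providing the dominating integrable function) and the short interval $[(1-\varepsilon)t,t]$ (a small remainder by the same bounds). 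Getting these two moment computations to interlock is where the work lies; everything else is soft.
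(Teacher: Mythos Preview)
Your proposal has a genuine gap in the main case $p\in(1,2]$. The $L^2$/variance approach you outline requires that $e^{-2\lambda_c t}E_{\delta_x}\langle g,X_t\rangle^2$ stay bounded (and that $W_t^\phi$ be $L^2$-bounded). Plugging $S_{t-s}g\le C\,e^{\lambda_c(t-s)}\phi$ into the second-moment formula and using the many-to-one identity gives the integrand $e^{-\lambda_c s}S_s(\beta\phi^2)(x)=\phi(x)\,\mathbb{E}_x^\phi[\beta(Y_s)\phi(Y_s)]$, whose ergodic limit is $\langle\beta\phi^2,\widetilde\phi\rangle$. Thus finiteness of the variance needs $\langle\beta\phi^2,\widetilde\phi\rangle<\infty$ and (for $W^\phi$) $\langle\phi^2,\widetilde\phi\rangle<\infty$, i.e.\ essentially the $p=2$ hypothesis, not $p\in(1,2]$. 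Your claimed inequality ``$\phi^2\le\text{const}\cdot(1+\phi^p)$'' goes the wrong way: for $p<2$ it fails wherever $\phi$ is large, and $\phi$ is typically unbounded (see the OU examples in the paper). So your scheme of matching second moments of $e^{-\lambda_c t}\langle g,X_t\rangle$ against $\langle g,\widetilde\phi\rangle^2 E(W_t^\phi)^2$ simply does not get off the ground when $p<2$.

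The paper avoids second moments entirely. It fixes $s>0$, conditions on $\mathcal{F}_t$, and writes $U_{t+s}[g]:=e^{-\lambda_c(t+s)}\langle g\phi,X_{t+s}\rangle$ as a sum of $N_t$ conditionally independent pieces. The $L^p$ martingale-difference inequality (valid for $p\in(1,2]$) together with the spine bound $E_{\delta_y}[(W_s^\phi)^p]\le\phi(y)(A(y,s)+B(y,s))$ from Lemma~\ref{mgcgce} yields $E_{\delta_x}|U_{t+s}[g]-E(U_{t+s}[g]\mid\mathcal{F}_t)|^p\to0$ as $t\to\infty$ for each fixed $s$. Separately, $E(U_{t+s}[g]\mid\mathcal{F}_t)=e^{-\lambda_c t}\sum_i\phi(X_i)h_s(X_i)$ with $h_s(y)=\mathbb{E}_y^\phi[g(Y_s)]$ bounded, so by many-to-one and ergodicity of the spine one gets $E_{\delta_x}|E(U_{t+s}[g]\mid\mathcal{F}_t)-\langle g\phi,\widetilde\phi\rangle W_t^\phi|\le\phi(x)\,\mathbb{E}_x^\phi|h_s(Y_t)-\langle g\phi,\widetilde\phi\rangle|\to\phi(x)\langle|h_s-\langle g\phi,\widetilde\phi\rangle|,\phi\widetilde\phi\rangle$, which vanishes as $s\to\infty$. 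An $\epsilon/3$ argument (first $t\to\infty$, then $s\to\infty$) plus $W_t^\phi\to W_\infty^\phi$ in $L^p$ finishes. The two-time conditioning is exactly what lets you trade the uniform kernel estimate (iii) for mere pointwise ergodicity of the spine, while the $L^p$ inequality \eqref{basicLpineq} is what replaces your variance computation; you should rebuild the argument around these two ingredients rather than around an $L^2$ calculation that the hypotheses do not support.
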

\subsection{Outline} The rest of this paper is organized as follows. In Section \ref{detailed} we embed our results into the literature, while in Section \ref{Examples} we discuss some key examples for the SLLN. The proofs are given in Section 4.
\section{Detailed comparison with some older results}\label{detailed}
The methods of Asmussen and Hering were based for the most part on classical
techniques of truncation and applications of the Borel-Cantelli Lemma. Using this method, they proved the convergence of $e^{-\lambda_c t}\langle X_t,g\rangle $ for all   $0\le g\in L^1(\widetilde\phi(x)\mathrm{d}x)$. It is also worth noting that the generic strength of their method extended to many
other types of branching processes; discrete time, discrete space and so on.

Interestingly, preceding all work of Asmussen and Hering is the
single article \cite{W} (later improved upon by \cite{B}). Watanabe demonstrates that when a suitable
Fourier analysis is available with respect to the operator $L+\beta
$, then by spectrally expanding any $g\in C_{c}^{+}\left( D\right)
,$\ the space of
nonnegative, continuous and compactly supported functions, one can show that $%
\{\langle g,X_{t}\rangle :t\geq 0\}$\ is almost surely asymptotically
equivalent to its mean. From this the classic Strong Law of Large Numbers
for dyadic branching Brownian motion in $\mathbb{R}^{d}$\ is recovered.\
Namely that when $L=\triangle /2$\ and $\beta $\ $>0$\ is a constant,
\begin{equation*}
\lim\nolimits_{t\uparrow \infty }t^{d/2}e^{-\beta t}X_{t}\left( B\right)
=\left( 2\pi \right) ^{d/2}\left| B\right| \times N_{\mu }
\end{equation*}
where $B$\ is any Borel set, $|B|$\ is its Lebesgue measure and $N_{\mu }$\
is a strictly positive random variable depending on the initial
configuration $\mu \in \mathcal{M}(\mathbb{R}^{d})$. The operator $1/2\Delta +\beta $
does not fall into the class $\mathcal{P}_1(D) .$ For an analogous result on supercritical super-Brownian motion see \cite{E}.

Let us discuss now how our assumptions relate to the assumptions imposed in the article \cite{AH76}.

In \cite{AH76} the  domain is bounded (and even one dimensional) when the Strong Law of Large Numbers is stated for branching diffusions; on general domains, only convergence in probability was obtained.
Furthermore, in \cite{AH76} the notion of {\it positively regular} operators was
introduced. In our context it first means  that
\begin{enumerate}
\item[\bf{(A)}] $\lambda_c>0$  (they call this property `supercriticality').
\item[\bf{(B)}] $\phi$ is bounded from above,
\item[\bf{(C)}] $\langle\tilde \phi, 1 \rangle <\infty.$
\end{enumerate}
Obviously, (B-C) is  stronger than the assumption $\langle \phi
,\widetilde \phi\rangle<\infty$ (product-criticality).

Secondly, $\{S_t\}_{t\ge 0}$, the semigroup corresponding to
$L+\beta$ (the so called `expectation semigroup') satisfies the
following condition. If $\eta$ is a nonnegative, bounded measurable
function on $\mathbb R^d$, then
\begin{equation*}\label{ah.unifconv}
\mathbf{(D)}\ \ \ \ \ \ \ S_t(\eta)(x) =\langle \eta , \widetilde
\phi \rangle\, \phi (x)\left[ e^{\lambda_c
t}+o\left(e^{\lambda_c t}\right)\right]\ \mathrm{as}\
t\uparrow\infty,\ \mathrm{uniformly\ in\ } \eta.
\end{equation*}
 Let $T_t$ be the semigroup defined by
$T_t(f):=S^{\phi}_t(f)=\frac{1}{\phi}S_t (\phi f),$ for all $0\le
f$ measurable with  $\phi f$ being bounded. Then $T_t$ correspond
to the $h$-transformed ($h=\phi$) operator $L_0^{\phi}$. Recall
that $L_0^{\phi}$ corresponds to a positive recurrent diffusion
process. Then, assuming that $\phi$ is bounded, it is easy to
check that the following condition would suffice for
(D) to hold:
\begin{equation}\label{fishy}
\lim_{t\uparrow \infty}\sup_{x\in D}\sup_{\| g\| \le 1 }\langle
g,\phi\widetilde \phi\rangle^{-1}\left|\,T_t (g)-\langle
g,\phi\widetilde \phi\rangle \,\right|=0,
\end{equation}
where $\|\cdot\|$ denotes $\sup$-norm. However this is not true in most cases on unbounded domains (or even on bounded domains with general unbounded coefficients) because of the requirement on the uniformity in $x$. (See our examples in Section \ref{Examples} --- neither of the examples on $\mathbb R^d$ satisfy (\ref{fishy}).)

Turning to superprocesses, there would seem to be considerably fewer
results of this kind in the literature (see the references \cite{Dawson,Dy,Et} for superprocesses in general). The most recent and general work in this area we are aware of are \cite{ET,EW,E}.

In \cite{ET} it was proved that  (in the vague topology)  $\{\exp
\{-\lambda _{c}t\}X_{t}:t\geq 0\}$ converges in law where $X$ is the
so called $(L,\beta,\alpha, \mathbb R^d )$-superprocess (with
$\alpha$ being the `activity parameter') satisfying that $L+\beta\in
\mathcal{P}\left(D\right)$ and  that $\alpha\phi$ is bounded from
above. (An additional requirement was that $\langle \phi,
\mu\rangle<\infty$ where $\mu=X_0$ is the deterministic starting
measure. ) The long and technical proof relied heavily on the theory
of dynamical systems applied to the Laplace transforms of $\{e^{-\lambda _{c}t}\phi X_{t},t\geq 0\}.$

In \cite{EW} the  convergence in law  was replaced by
convergence in probability. Furthermore, instead of  $\mathbb {R}^d$
 a general Euclidean domain $D\subseteq \mathbb
{R}^d$ was considered. The heavy analytic method of \cite{ET} was
replaced by a different, simpler and more probabilistic one. The
main tool was the introduction of a `weighted superprocess' obtained
by a `space-time $H$-transform'.
\section{Examples}\label{Examples}
In this section we give examples which satisfy all the assumptions we have, and thus, according to Theorem \ref{SLLN}, obey the SLLN. (Those  examples  do not fall into the setting in \cite{AH76, AH77}.)

Before we turn to the specific examples, we give some heuristics. Although these are not actually needed for understanding the examples, we feel that the reader `gets a more complete picture' by first reading them.
%
%
\begin{remark}[Expectation calculations and local vs. global growth rates]\rm
From (\ref{opp}), we have
\begin{align*}
E_x\langle \mathbf{1}_{\{\ \cdot\ \in dy\}},X_t \rangle
&= e^{\lambda_c t}\,\frac{\phi(x)}{\phi(y)}\,  p(t,x,y)
\end{align*}
and then, by ergodicity,
\[
e^{-\lambda_c t}\,E_x\langle 1,X_t \rangle = \phi(x)\,\int_{\mathbb{R}^d} \frac{p(t,x,y)}{\phi(y)}\,dy
\rightarrow \phi(x) \int_{\mathbb{R}^d} \widetilde\phi(y)\,dy,\ \text{as}\ t\to\infty.
\]
Hence, if $\langle\widetilde\phi, 1\rangle<\infty$, then the global population growth is the same as the local population growth,
whereas, if $\langle\widetilde\phi, 1\rangle<\infty$ the global growth rate exceeds the local growth rate.

\end{remark}
\begin{remark}[Heuristics for $a_t$ and $\zeta(t)$]\rm
One may wonder how  one can find a function $a$ as in Definition \ref{class}(iv).
In fact, this will often be straightforward to find.

Suppose that $Y$ is conservative and fix $x\in D$.
(If $Y$ is not conservative, then there is no function $a$ satisfying (iv). Indeed,  $D_t\uparrow$ implies that $\cup_{t\le T}D_t\subset D_{T}$, for $T>0$. Now, if  $Y_{T}=\Delta$ with positive probability for some $T>0$, then  the requirement $D_{a_{T}}\subset\subset D$ cannot hold.)
If, for example, we can pick a deterministic increasing function
$a$ such that, for all $\delta>0$,
$$\sum_{n=1}^{\infty}P_{\delta_x}(\mathrm{supp}(X_{n\delta})\not\subset D_{a_{n\delta}}) <\infty,$$
then Borel-Cantelli says that the function $a$ is an appropriately choice.
Since the probability one particle is present in a set is trivially dominated by the expected numbers in that set, it will be much easier to check that
$$\sum_{n=1}^{\infty} E_{\delta_x}\langle{\mathbf{1}_{D_{a_{n\delta}}^c}},X_{n\delta}\rangle <\infty.$$

If we can choose $a_t$ such that, for some $\epsilon>0$,
\[
\int_{|y|>a_t} \frac{p(t,x,y)}{\phi(y)}\,dy<e^{-(\lambda_c+\epsilon)t}
\]
then we will have satisfied
\[
\sum_{n=1}^{\infty} E_x\langle{\mathbf{1}_{D_{a_{n\delta}}^c}},X_{n\delta}\rangle <\infty.
\]

Heuristically,
\emph{if} the spine transition density $p(t,x,y)$ converges to its equilibrium $\phi(y)\widetilde\phi(y)$
sufficiently quickly even for very large $y$, we might hope to take
\[
a_t\approx\widetilde\phi^{-1}\left(e^{-\lambda_c t}\right)
\]

If the spine starts at a very large position,
since it is ergodic it will tend to move back toward the origin and
Ventcel-Friedlin  large deviation theory suggests that it will `closely' follow the
path of a deterministic particle with the same drift function.
We can use this to \emph{guess} for a suitable form for $\zeta(t)$.
At least heuristically, to find out how far away the spine particle may start in order that it both returns
to the vicinity of the origin and then ergodizes towards its invariant measure before large time $t$,
we can solve the deterministic differential equation
$$\dot f(t)=\mu(f(t))-a(f(t))\frac{\nabla\phi(f(t))}{\phi(f(t))}$$
when $L=\frac{1}{2}a(x)\Delta -\mu(x) \cdot\nabla$, and take $\zeta(t)$ a little larger than
$|f(t)|$.

Indeed, these heuristics appear to the correct form for both $a_t$ and $\zeta(t)$ in the examples considered below.

\end{remark}

\begin{example} [OU process with quadratic breeding rate]\rm
Let $\sigma, \mu, a, b>0$ and consider
\[
L:=\frac{1}{2}\sigma^2\Delta -\mu x\cdot\nabla\ \text{on}\ \mathbb{R}^d
\]
corresponding to an (inward) Ornstein-Uhlenbeck process and let $\beta(x):=b\,x^2+a.$
Since $L$ corresponds to a recurrent diffusion and $\beta$ is a smooth function with $\beta\ge 0$ and
$\beta\not\equiv 0$, it follows that $\lambda_c>0$ (see Chapter 4 in \cite{P95}).
The equilibrium distribution for $L$ is given by a normal distribution, 
\[
\pi(x)=
\left(\frac{\mu}{\pi\sigma^2}\right)^{d/2}
\exp\left\{-\frac{\mu}{\sigma^2}\ x^2\right\}.
\]
\emph{Suppose} that $\mu>\sigma\sqrt{2b}$.
Defining $\gamma^\pm:=\frac{1}{2\sigma^2} \left(\mu\pm\sqrt{\mu^2-2b\sigma^2}\right)$,
for the principle eigenvalue problem with $(L+\beta)\phi=\lambda_c\phi$ we can take
\[
\lambda_c:=\sigma^2\gamma^-+a,
\qquad
\phi(x):=c^-\, \exp\{\gamma^- x^2\}
\quad\text{and}\quad
\widetilde\phi(x)= c^+\, \exp\{-\gamma^+ x^2\}
\]
where
$c^-:=\left(1-(2b\sigma^2/\mu^2)\right)^{\frac{d}{8}}$,
$c^+:=c^-\left(\mu/(\pi\sigma^2)\right)^{\frac{d}{2}}$.
Note that $\widetilde\phi(x):=\phi(x)\pi(x)$ and $L+\beta$ is a self-adjoint operator with respect to $\pi$.

Some calculations using the `one-particle picture' (equation \ref{opp}) reveals that, in expectation,
the support of the process grows like $\sqrt{\lambda_c t/\gamma^+}$
and one can pick $a(t)=\sqrt{\lambda t/\gamma^+}$ for any $\lambda>\lambda_c$
and condition (iv) in Definition \ref{class} will hold.

The spine is also an (inward) Ornstein-Uhlenbeck process with parameter $\alpha:=\mu-2\gamma^-\sigma^2=\sqrt{\mu^2-2b\sigma^2}$
with
\[
(L+\beta-\lambda_c)^\phi=L+\sigma^2\frac{\nabla\phi}{\phi}\cdot\nabla = \frac{1}{2}\sigma^2\Delta -\alpha x\cdot\nabla\ \text{on}\ \mathbb{R}^d,
\]
and transition density
\[
p(t,x,y)
=\left(\frac{\alpha}{\pi\sigma^2\left(1-e^{-2(\alpha/\sigma^2) t}\right)}\right)^{d/2}
\exp\left[-\frac{\alpha\sum_{i=1}^d (y_i-x_ie^{-(\alpha/\sigma^2) t})^2}{\sigma^2(1-e^{-2(\alpha/\sigma^2) t})}\ \right].
\]
We see that the drift of the inward OU reduces the influence of any starting position
exponentially in time.
Indeed, one can take $\zeta(x)=(1+\epsilon)(\sigma^2/\alpha)\log x$ for any $\epsilon>0$
for condition ($iii^*$) in Definition \ref{class} to hold.
Finally, we trivially note that $\zeta(a(t))=\mathcal{O}(t)$ (in fact, only $\log t$ growth), hence all necessary conditions are satisfied for our strong law theorem to hold.

Note, a strong law for a generalization of this model can be found in \cite{H}
where the convergence is proved using a martingale expansion for continuous functions
$g\in L^2(\pi)$ (rather than compactly supported $g$).
Almost sure asymptotic growth rates (and a.s. support) for the same model are studied in \cite{GHH}.

This is certainly a non-trivial model and it highlights the strength of our general result.
In particular, a quadratic breeding rate is \emph{critical} in the sense that a BBM with breeding rate $\beta(x)=\text{const}\cdot x^p$ explodes in a finite time a.s. if and only if $p>2$, with explosion in the expected population size when $p=2$. When a branching inward OU process with quadratic breeding is considered here, a strong enough drift with $\mu>\sigma\sqrt{2b}$ can balance the high breeding, whereas any lower drift would lead to a dramatically different behavior.
$\hfill\diamond$
\end{example}
\begin{example}[Outward  OU process with constant breeding rate]\rm
Let $\sigma^2, \mu, b>0$ and consider
\[
L:=\frac{1}{2}\sigma^2\Delta + \mu x\cdot\nabla\ \text{on}\ \mathbb{R}^d
\]
corresponding to an `outward' Ornstein-Uhlenbeck process and let
$\beta(\cdot)\equiv b.$
As the spatial movements have no affect on the branching, the global population grows like $e^{\beta t}$ and this is achieved `naturally' with particles moving freely. This corresponds to $(L+\beta)\widetilde\phi=b \widetilde\phi$ with $\widetilde\phi\equiv 1$.
On the other hand, the principle eigenvalue is $\lambda_c=b-\mu<b$ with $\phi(x)=\text{const}\cdot \exp\{-(\mu/\sigma^2) x^2\}$, it being associated with the \emph{local}, as opposed to \emph{global}, growth rate.

After some similar expectation calculations to the inward OU in quadratic potential,
an upper bound on the process' spread is roughly the same as for an individual outward OU particle, that is,
we can take $a(t)=\exp\{(1+\delta) (\mu/\sigma^2) t\}$ for any $\delta>0$.

Despite the transient nature of the original motion, the spine is an inward OU process
\[
(L+\beta-\lambda_c)^\phi=L+\sigma^2\frac{\nabla\phi}{\phi}= \frac{1}{2}\sigma^2\Delta -\mu x\cdot\nabla\ \text{on}\ \mathbb{R}^d,
\]
with equilibrium $\phi\widetilde\phi(x)\propto  \exp\{-(\mu/\sigma^2) x^2\}$.
Intuitively, this is the motion that maximizes the \emph{local} growth rate at $\lambda_c$ (here its the original motion `conditioned to keep returning to the origin').
We can therefore take $\zeta(x)=(1+\epsilon)(\sigma^2/\mu)\log x$ for any $\epsilon>0$ and
hence still find that $\zeta(a_t)=(1+\epsilon)(1+\delta)t=\mathcal{O}(t)$.
All the conditions required for the strong law to hold are again satisfied.
$\hfill\diamond$
\end{example}

\begin{example}[BBM  with $\beta\in C_c^+(\mathbb R^d)$ and $\beta\not\equiv 0$ for $d=1,2$]\rm
Consider the $(\frac{1}{2}\Delta+\beta)$-branching diffusion where $\beta\in C_c^+(\mathbb R^d)$ and $\beta\not\equiv 0$ for $d=1,2$. Since Brownian motion is recurrent in dimension $d=1,2$, it follows that $\lambda_c>0$ and in fact, the operator $\frac{1}{2}\Delta+\beta-\lambda_c$ is product-critical and even product-$p$-critical for all $p>1$ (see Example 22 in \cite{ET}).

We now show how to find a $\zeta$ that satisfies ($iii^*$) in Definition \ref{class}. We do it for $d=1$, the $d=2$ case is similar.

Let $b>0$ be so large that supp$(\beta)\subset [-b,b]$ and let $M:=\max_{\mathbb R} \beta$. Recall that $p(t,x,y)$ denotes the (ergodic) kernel corresponding to $(\frac{1}{2}\Delta+\beta-\lambda_c)^{\phi}$. In this example $P$ will denote the corresponding probability. By comparison with the constant branching rate case, it is evident that $a_t:=\sqrt{2M}\cdot t$ is an appropriate choice. Therefore we have to find a $\zeta$ which satisfies that for any fixed ball $B$,
$$\lim_{t\to\infty}\sup_{|x|\le t}\left|\frac{p(x,B,\zeta(t))}{\int_B \phi\widetilde\phi(y)\, dy}-1\right|= 0$$
together with the condition that $\zeta(a_t)=\zeta (\sqrt{2M}\cdot t)=\mathcal{O} (t)$ as $n\to\infty$.

An easy computation (see again  Example 22 in \cite{ET}) shows that on $\mathbb R\setminus [-b,b]$, $$\left(\frac{1}{2}\Delta+\beta-\lambda_c\right)^{\phi}=\frac{1}{2}\Delta-\text{sgn} (x)\cdot \sqrt{2\lambda_c}\,\frac{\text{d}}{\text{d}x},$$
where sgn$(x):=x/|x|,\ x\neq 0$. Fix an $\epsilon$ and let $\tau_{\pm b}$ and $\tau_0$ denote the first hitting time (of a single Brownian particle) of $[-b,b]$ and of $0$, respectively.  We first show that as $t\to\infty$,
\begin{equation}\label{velocity}
\sup_{b<|x|\le t} P_x\left[\tau_{\pm b}>\frac {t(1+\epsilon)}{\sqrt{2\lambda_c}}\right]\to 0.
\end{equation}
Obviously, it is enough to show that for example $$\textbf{P}_t\left[\tau_0>\frac {t(1+\epsilon)}{\sqrt{2\lambda_c}}\right]\to 0$$
where $\textbf{P}$ corresponds to $\frac{1}{2}\Delta- \sqrt{2\lambda_c}\,\frac{\text{d}}{\text{d}x}$ on $[0,\infty)$. Indeed, if $\mathcal{W}$ denotes standard Brownian motion  starting at the origin with probability $Q$, then \begin{eqnarray*}\textbf{P}_t\left[\tau_0>\frac {t(1+\epsilon)}{\sqrt{2\lambda_c}}\right] \le \textbf{P}_t \left[Y_{\frac {t(1+\epsilon)}{\sqrt{2\lambda_c}}}>0\right]=Q\left[t-\sqrt{2\lambda_c}\frac {t(1+\epsilon)}{\sqrt{2\lambda_c}}+\mathcal{W}_{\frac {t(1+\epsilon)}{\sqrt{2\lambda_c}}}>0\right]\\=Q\left[\mathcal{W}_{\frac {t(1+\epsilon)}{\sqrt{2\lambda_c}}}>\epsilon t\right]\to 0\end{eqnarray*}
(the last term tends to zero by the SLLN for $\mathcal{W}$).

We now claim that $\zeta(t):=\frac {t(1+2\epsilon)}{\sqrt{2\lambda_c}}$ satisfies $$\lim_{t\to\infty}\sup_{|x|\le t}\left|\frac{p(x,B,\zeta(t))}{\int_B \phi\widetilde\phi(y)\, \mathrm{d}y}- 1\right|=0.$$
(The condition $\zeta(a_t)=\mathcal{O} (t)$ is obviously satisfied.) By the ergodicity of $p(t,x,y)$, it is sufficient to show that $\zeta$ satisfies $$\lim_{t\to\infty}\sup_{b<|x|\le t}\left|\frac{p(x,B,\zeta(t))}{\int_B \phi\widetilde\phi(y)\, \mathrm{d}y}- 1\right|=0.$$
Let, for example  $b<x<t$. By the strong Markov property at $\tau_{b}$ (the hitting time of $b$) and by (\ref{velocity}),
$$\frac{p(x,B,\zeta(t))}{\int_B \phi\widetilde\phi(y)\, dy}=\frac{p\left(b,B,\zeta(t)-\frac {t(1+\epsilon)}{\sqrt{2\lambda_c}}\right)}{\int_B \phi\widetilde\phi(y)\, dy}\,P\left[\tau_b\le\frac {t(1+\epsilon)}{\sqrt{2\lambda_c}} \right] + o(1),$$ uniformly in $b<x\le t$.

Finally, $$\lim_{t\to\infty}\frac{p(b,B,\zeta(t)-\frac {t(1+\epsilon)}{\sqrt{2\lambda_c}})}{\int_B \phi\widetilde\phi(y)\, dy}= 1$$ because $p(t,x,y)$ is an ergodic kernel and $$\lim_{t\to\infty}\left[\zeta(t)-\frac {t(1+\epsilon)}{\sqrt{2\lambda_c}}\right]=\lim_{t\to\infty}\frac {t\epsilon}{\sqrt{2\lambda_c}}=\infty,$$ completing the proof of our claim about $\zeta$.
$\hfill\diamond$
\end{example}
\begin{example}[Bounded domain]\rm
First note that when $D$ is bounded, an important subset of $\mathcal{P}_p(D),\ p>1$  is formed by the operators $L+\beta $ which are uniformly elliptic on $D$ with  bounded coefficients which are smooth up to the boundary of $D$ and with $\lambda _{c}>0$. That is, in this case $L+\beta -\lambda_c$ is critical (see \cite{P95}, Section 4.7), and since $\phi$ and $\widetilde\phi$ are Dirichlet eigenfunctions (are zero at the boundary of $D$), it is even product-$p$-critical for all $p>1$. Theorem \ref{WLLN} thus applies.

Although in this case $Y$ is not conservative in $D$, in fact even Theorem \ref{SLLN} will be applicable whenever $(iii^*)$ can be strengthened to the following uniform (in $x$) convergence on $D$:
\begin{equation}\label{lucky}
\lim_{t\to\infty}\sup_{x\in D, y\in B}\left|\frac{p(x,y,\zeta(t))}{\phi\widetilde{\phi}(y)}-1\right|=0.
\end{equation}
(Note that \cite{AH77} has a similar global uniformity assumption  --- see the paragraph after (\ref{fishy}).)
 Indeed, then the proof of Theorem \ref{SLLN} (which can be found later, in Section \ref{Proofs}) can  be simplified, because the function $a$  is not actually  needed: $D_{a_n}$ can be replaced by $D$ for all $n\ge 1$.

 As far as (\ref{lucky}) is concerned, it is often relatively easy to check. For example, assume that $d=1$ (the method can be extended for radially symmetric settings too) and so let $D=(r,s)$. Then the drift term of the spine is $b+a (\log \phi)'$. Now, if this is negative and bounded away from zero at $s-\epsilon<x<s$ and positive and bounded away from zero at $r<x<r+\epsilon$ with some $\epsilon\in(0,s-r)$, then (\ref{lucky}) can be verified by a method similar to the one in the previous example. The above condition on the drift is not hard to check in a concrete example (note that since $\phi$ satisfies the Dirichlet boundary condition, $\log \phi$ tends to $-\infty$ at the boundary).

If we relax the regularity assumptions on $L+\beta$ then for example $\phi$ is not necessarily upper bounded, and so we are leaving the family of operators handled in \cite{AH77} (see the four paragraphs preceding (\ref{fishy})); nevertheless our method still works as long as $L+\beta\in \mathcal{P}_p^*(D),\ p>1$ (for the SLLN) or $L+\beta\in \mathcal{P}_p(D),\ p>1$ (for the WLLN). 
 $\hfill\diamond$
\end{example}

\section{Proofs}\label{Proofs}

\subsection{A spine approach}
To establish the $L^{p}\left( P_{\delta_x }\right) $ convergence of
$W^{\phi }$ for $p>1$ we appeal to a, by now, standard
techniques that have been introduced to the literature by \cite{LPP}
and involves a change of measure inducing a `spine' decomposition.
Similar applications can be found in \cite{A,BK,EK,HHc}
to name but a few.
See for example \cite{Ev,Et} as well as the discussion in \cite{EK}
for yet further references.

It is important to point out that we will need the spine decomposition not only to establish the $L^{p}$-convergence mentioned above but also in the key lemma (Lemma \ref{keylemma}) in the proof of Theorem \ref{SLLN}. In both cases, we found the spine method to be indispensable and we were not able to replace it by other $L^p$ methods.

Before we can state our spine decomposition, we need to recall some facts
concerning changes of measures for diffusions and Poisson processes.

\textbf{Girsanov change of measure.} Suppose that $Y$ is adapted to some
filtration $\{\mathcal{G}_{t}:t\geq 0\}.$ Under the change of measure
\begin{equation}
\left. \frac{d\mathbb{P}_{x}^{\phi }}{d\mathbb{P}_{x}}\right| _{\mathcal{G}%
_{t}}=
\frac{\phi \left( Y_{t}\right) }{\phi \left( x\right) }%
e^{-\int_{0}^{t}\left(\lambda _{c}-\beta \left( Y_{s}\right)\right) ds}
\label{spinecofm}
\end{equation}
the process $(Y,\mathbb{P}_{x}^{\phi })$ corresponds to the $h$-transformed
$%
(h=\phi )$ generator $(L+\beta -\lambda _{c})^{\phi }=L+a\phi ^{-1}\nabla
\phi \cdot \nabla .$ Note now in particular that since $L+\beta\in
\mathcal{P%
}_1\left(D\right) $, it follows that $(Y,\mathbb{P}_{x}^{\phi })$ is
an ergodic diffusion with transition density $p(x,y,t)$ and an invariant
density
$\phi\tilde\phi$.

\textbf{Change of measure for Poisson processes.} Suppose that given a
non-negative continuous function $g(t),t\geq 0,$ the Poisson process $(n,%
\mathbb{L}^{g})$ where $n=\{\{\sigma _{i}:i=1,...,n_{t}\}:t\geq 0\}$ has
instantaneous rate $g(t).$ Further, assume that $n$ is adapted to $\{%
\mathcal{G}_{t}:t\geq 0\}.$ Then under the change of measure
\begin{equation*}
\left. \frac{d\mathbb{L}^{2g}}{d\mathbb{L}^{g}}\right| _{\mathcal{G}%
_{t}}=2^{n_{t}}\exp \left\{ -\int_{0}^{t}g\left( s\right) ds\right\}
\end{equation*}
the process $(n,\mathbb{L}^{2g})$ is also a Poisson process with
rate $2g.$ See  Chapter 3 in \cite{JS}.

\begin{theorem}[The spine construction]\label{spineconstruction}
Let $\{\mathcal{F}_{t}:t\geq 0\}$ be the natural filtration generated by
$X.$
Define the change of measure
\begin{equation*}
\left. \frac{d\widetilde{P}_{\delta _{x}}}{dP_{\delta _{x}}}\right| _{%
\mathcal{F}_{t}}=e^{-\lambda _{c}t}\frac{\langle \phi ,X_{t}\rangle }{\phi
(x)}=\frac{W_{t}^{\phi }}{\phi (x)}.
\end{equation*}
Then, under $\widetilde{P}_{\delta _{x}}$,
$X$ can be constructed as follows:
\begin{itemize}
\item a single particle, $Y=\{Y_t\}_{t\geq0}$, referred to as the
\emph{spine}, initially starts at $x$ and moves as a diffusion corresponding
to the $h$-transformed operator $L+a\phi ^{-1}\nabla \phi \cdot \nabla$;
\item the spine undergoes fission into two particles at an
\emph{accelerated} rate $2\beta(Y_t)$ at time $t$,
one of which is selected uniformly at random to continue the spine motion
$Y$;
\item the remaining child gives rise to an independent copy
of a $P$-branching diffusion started at its space-time point of creation.
\end{itemize}
\end{theorem}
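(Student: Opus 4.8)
The plan is to verify that the branching-diffusion-with-distinguished-spine described in the statement, call it the process under $\widetilde{P}^{\mathrm{spine}}_{\delta_x}$ on an enlarged space carrying both the population $X$ and the spine label, has the property that its projection onto $\mathcal{F}_t$ (forgetting which line of descent is the spine) agrees with $P_{\delta_x}$ reweighted by $W_t^\phi/\phi(x)$. First I would set up the many-to-one / spine change of measure at the level of a single generic particle: combine the Girsanov change \eqref{spinecofm}, which turns the $Y$-motion into the $(L+\beta-\lambda_c)^\phi$-diffusion and contributes the factor $\phi(Y_t)\phi(x)^{-1}e^{-\int_0^t(\lambda_c-\beta(Y_s))\,ds}$, with the Poisson change of measure that accelerates the branching clock from rate $\beta(Y_t)$ to $2\beta(Y_t)$ and contributes $2^{n_t}\exp\{-\int_0^t\beta(Y_s)\,ds\}$. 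Multiplying these, the two $\exp\{-\int_0^t\beta\}$ factors combine with the $e^{\lambda_c t}$ and the $2^{n_t}$ to give exactly the Radon--Nikodym density that, when summed uniformly over the $2^{n_t}$ possible choices of spine among the current particles, reproduces $e^{-\lambda_c t}\phi(Y^{\mathrm{spine}}_t)/\phi(x)$ times the population law; this is the standard bookkeeping underlying all spine decompositions (as in \cite{LPP,HHc}).

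The key step is then the identity
\[
e^{-\lambda_c t}\,\frac{\langle\phi,X_t\rangle}{\phi(x)}\;=\;\sum_{u\in X_t} e^{-\lambda_c t}\,\frac{\phi(Y^u_t)}{\phi(x)},
\]
so that the martingale $W_t^\phi/\phi(x)$ is precisely the average over all current particles $u$ of the single-particle change-of-measure density associated with designating $u$ as the spine. Formally I would introduce the augmented filtration $\{\widetilde{\mathcal F}_t\}$ generated by $X$ together with the spine's trajectory, define a candidate measure $\widetilde{P}^{\mathrm{spine}}_{\delta_x}$ on this space by the single-particle density above (along the spine), check by the two Girsanov-type lemmas that under this measure the spine moves as the $h$-transformed diffusion, branches at rate $2\beta$, one offspring continues the spine, and — crucially — the sub-population thrown off at each spine-fission is an independent ordinary $(L,\beta;D)$-branching diffusion started at that space-time point (this last point follows because the extra density factors all attach to the spine line only, leaving the off-spine subtrees governed by the original branching mechanism). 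Finally, projecting $\widetilde{P}^{\mathrm{spine}}_{\delta_x}$ onto $\mathcal F_t$ by summing over the spine choice recovers, by the displayed identity, exactly $\widetilde{P}_{\delta_x}|_{\mathcal F_t}=(W_t^\phi/\phi(x))\cdot P_{\delta_x}|_{\mathcal F_t}$, which is the claim.

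The main obstacle I anticipate is not any single computation but the careful measure-theoretic construction on the enlarged path space — specifying the $\sigma$-algebras for the genealogy-plus-spine, verifying that the candidate density is a genuine martingale (consistency in $t$), and justifying the interchange of the sum over spine choices with the change of measure. One must also be slightly careful that $\phi$ is not assumed bounded: the finiteness and martingale property of $W^\phi$ were already established via the invariance of $\phi$ earlier in the paper, so here it suffices to invoke that, but the approximation of $\phi$ by bounded functions may need to be threaded through the spine argument as well. Once the framework is in place, the verification of the three bulleted dynamical properties is a routine consequence of the Girsanov and Poisson changes of measure already recorded.
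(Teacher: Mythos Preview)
Your proposal is correct and follows exactly the route the paper has in mind: the paper does not give a self-contained proof but instead records the two ingredients you use---the Girsanov change \eqref{spinecofm} and the Poisson rate-doubling change---immediately before the theorem, then refers the reader to \cite{EK} (Theorem 5) and \cite{HHa} for the details. Your sketch of combining these two changes along a distinguished line of descent, summing over the choice of spine to recover $W_t^\phi/\phi(x)$, and noting that the off-spine subtrees are untouched by the density, is precisely the argument in those references.
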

An similar construction for BBM was established in Chauvin and Rouault
\cite{CR}.
See  \cite{EK} (Theorem 5) or \cite{HHa}, for example,  on how to prove it.

\begin{remark}[\textbf{The Spine decomposition.}]\rm
\label{spineremark}
Theorem \ref{spineconstruction} says that $(X,\widetilde{P}_{\delta _{x}})$
has the same law as a process constructed in the following way.
A $(Y,\mathbb{P}_{x}^{\phi })$-diffusion is initiated along which
$(L,\beta;D)$-branching processes immigrate at space-time points
$\{(Y_{\sigma _{i}},\sigma
_{i}):i\geq 1\}$ where, given $Y,\ n=\{\{\sigma_{i}:i=1,...,n_{t}\}:t\geq
0\}$ is a Poisson process with law $\mathbb{L}^{2\beta (Y)}$.
It will often be \emph{very} useful to think of $(X,\widetilde{P}_{\delta
_{x}})$
as being constructed in this richer way and it will be convenient to define
the natural
filtration of the spine and the birth process along the spine as
$\mathcal{G}_t:=\sigma(Y_s,n_s:s\leq t)$.
Note that using the `spine construction' of $(X,\widetilde{P})$ as discussed
in Remark \ref{spineremark}, we can write
\begin{equation*}
W^\phi_t = e^{-\lambda_c t}\phi(Y_t) + \sum_{i=1}^{n_t}
e^{-\lambda_c  \sigma_i} W_i,
\end{equation*}
where, conditional on the spine filtration $\mathcal{G}_t$,
$W_i$ is an independent copy of the martingale $W_t^\phi$ started from
position $Y_{\sigma_i}$ and
run for time $t-\sigma_i$ where $\sigma_i$ is the $i^{th}$ fission time
along the spine for $i=1,\dots,n_t$.
Remembering that particles off the spine behave the same as if under the
original measure $P$
and that the martingale property gives $P_{\delta_x}(W_t^\phi)=\phi(x)$, we
then have the so called
\emph{`spine decomposition'}:
\begin{equation}
\widetilde{E}\left(W^\phi_t\Big|{\mathcal{G}_t}\right)e^{-\lambda_c t}
\phi(Y_t)
+ \sum_{i=1}^{n_t} e^{-\lambda_c \sigma_i} \phi(Y_{\sigma_i})
\label{spinedecomp}
\end{equation}
\end{remark}

\textbf{The $L^p$-convergence of the martingale.} The a.s. convergence of
$W^{\phi }$ can be complemented with the following result.
\begin{lemma}\label{mgcgce}Assume that $L+\beta$ belongs to
$\mathcal{P}_p(D)$ and that $\langle\beta
\phi^p,\widetilde{\phi}\rangle<\infty$ for some $p\in (1,2]$.
Then, for $x\in D$, $W^{\phi }$ is an $L^{p}\left(
P_{{\delta_x}}\right)$-convergent martingale. Moreover, if $\sup_D
\beta<\infty$, then the same conclusion holds if we only assume that $p>1$.
\end{lemma}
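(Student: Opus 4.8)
The plan is to use the spine decomposition \eqref{spinedecomp} together with a standard $L^p$-boundedness argument of the type pioneered by Lyons--Pemantle--Peres. Since $W^\phi$ is a nonnegative martingale that converges a.s. to $W^\phi_\infty$, by standard theory it suffices to show that $\sup_{t\geq 0}\widetilde E_{\delta_x}(W^\phi_t)^{p-1}<\infty$ is \emph{not} quite what we want; rather, the cleanest route is: (a) first establish the bound under the change of measure, i.e. show $\sup_t \widetilde E_{\delta_x}\bigl[(W^\phi_t)^{p-1}\bigr]<\infty$, which by a well-known measure-change identity (e.g. the argument in \cite{LPP}, see also \cite{HHa}) is equivalent to $\sup_t E_{\delta_x}\bigl[(W^\phi_t)^{p}\bigr]<\infty$, hence to $L^p(P_{\delta_x})$-boundedness and therefore $L^p$-convergence (using $p\in(1,2]$ and uniform integrability of $(W^\phi_t)^p$). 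So the whole problem reduces to bounding $\widetilde E_{\delta_x}[(W^\phi_t)^{p-1}]$ uniformly in $t$.

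For that, I would condition on the spine filtration $\mathcal G_t=\sigma(Y_s,n_s:s\leq t)$ and use concavity: since $p-1\in(0,1]$, the map $z\mapsto z^{p-1}$ is concave, so by the conditional Jensen inequality (applied with the tower property through $\mathcal G_\infty$) one gets $\widetilde E_{\delta_x}[(W^\phi_t)^{p-1}] \leq \widetilde E_{\delta_x}\bigl[\bigl(\widetilde E(W^\phi_t\mid\mathcal G_\infty)\bigr)^{p-1}\bigr]$, and the conditional expectation on the right is exactly the spine decomposition sum in \eqref{spinedecomp}, namely $e^{-\lambda_c t}\phi(Y_t)+\sum_{i=1}^{n_t}e^{-\lambda_c\sigma_i}\phi(Y_{\sigma_i})$. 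Then I would use the elementary inequality $(\sum a_j)^{p-1}\leq \sum a_j^{p-1}$ for nonnegative $a_j$ (valid since $p-1\leq 1$) to bound this by $e^{-\lambda_c(p-1)t}\phi(Y_t)^{p-1}+\sum_{i=1}^{n_t}e^{-\lambda_c(p-1)\sigma_i}\phi(Y_{\sigma_i})^{p-1}$. It remains to take $\widetilde E_{\delta_x}$ of this. The first term tends to $0$ by ergodicity of $(Y,\mathbb P^\phi_x)$ provided $\phi^{p-1}$ is integrable against the invariant density $\phi\widetilde\phi$, i.e. provided $\langle\phi^p,\widetilde\phi\rangle<\infty$ — which is exactly condition (ii) of $\mathcal P_p(D)$. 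For the sum, I would use that, given $Y$, $n$ is Poisson with rate $2\beta(Y_t)$, so by the Poisson intensity formula $\widetilde E\bigl[\sum_i e^{-\lambda_c(p-1)\sigma_i}\phi(Y_{\sigma_i})^{p-1}\,\big|\,Y\bigr]=\int_0^t 2\beta(Y_s)e^{-\lambda_c(p-1)s}\phi(Y_s)^{p-1}\,ds$, and then take $\mathbb E^\phi_x$ and use ergodicity again: the time-$s$ contribution is asymptotically $2\langle\beta\phi^{p-1},\phi\widetilde\phi\rangle e^{-\lambda_c(p-1)s}=2\langle\beta\phi^p,\widetilde\phi\rangle e^{-\lambda_c(p-1)s}$, whose integral over $s\in[0,\infty)$ converges because $\lambda_c>0$ (condition (i)) and $\langle\beta\phi^p,\widetilde\phi\rangle<\infty$ by hypothesis.

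The main obstacle — and the place requiring care rather than a one-line estimate — is making the two ergodic-averaging steps rigorous \emph{uniformly enough to get a finite supremum in $t$}, not merely a limit: one needs that $\sup_{s\geq 0}\mathbb E^\phi_x[\beta(Y_s)\phi(Y_s)^{p-1}]<\infty$ and $\sup_{s\geq0}\mathbb E^\phi_x[\phi(Y_s)^{p-1}]<\infty$, which for an ergodic diffusion started from a point follows from transition-density control but should be stated carefully (this is where one invokes the product $p$-criticality $\langle\phi^p,\widetilde\phi\rangle<\infty$ and the extra moment hypothesis $\langle\beta\phi^p,\widetilde\phi\rangle<\infty$; in the special case $\sup_D\beta<\infty$ the $\beta$-factor is harmless and one only needs $\langle\phi^p,\widetilde\phi\rangle<\infty$, which is why the restriction $p\in(1,2]$ can then be dropped — the concavity trick is the only place $p\leq 2$ was used, and for $\sup_D\beta<\infty$ one can instead run a direct $L^p$ estimate or iterate, cf. \cite{HHa}). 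Finally, assembling: $\sup_t E_{\delta_x}[(W^\phi_t)^p]<\infty$ gives uniform integrability of $\{(W^\phi_t)^p\}$ (since $p\leq 2$, or in general by de la Vallée-Poussin once the bound is in hand), hence $W^\phi_t\to W^\phi_\infty$ in $L^p(P_{\delta_x})$, completing the proof.
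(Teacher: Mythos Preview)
Your proposal is correct and follows essentially the same route as the paper's proof: both use the measure-change identity $\phi(x)^{-1}E_{\delta_x}[(W^\phi_t)^p]=\widetilde E_{\delta_x}[(W^\phi_t)^{p-1}]$, apply conditional Jensen (concavity of $z\mapsto z^{p-1}$ for $p-1\in(0,1]$) to pass to the spine decomposition \eqref{spinedecomp}, use subadditivity $(\sum a_j)^{p-1}\le\sum a_j^{p-1}$, take expectations via the Poisson intensity formula, and bound the resulting ``spine term'' and ``sum term'' using ergodicity of $(Y,\mathbb P^\phi_x)$ together with $\langle\phi^p,\widetilde\phi\rangle<\infty$ and $\langle\beta\phi^p,\widetilde\phi\rangle<\infty$; Doob's $L^p$ inequality then finishes.

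The only noteworthy divergence is the ``moreover'' clause. The paper does not attempt a direct $L^p$ argument for $p>2$; instead it observes that when $\sup_D\beta<\infty$, the hypotheses $\langle\phi^p,\widetilde\phi\rangle<\infty$ automatically yield $\langle\phi^r,\widetilde\phi\rangle<\infty$ and $\langle\beta\phi^r,\widetilde\phi\rangle<\infty$ for every $r\in(1,p)$, so one may simply replace $p$ by some $r\in(1,2]$ and run the same proof. Your suggestion to ``run a direct $L^p$ estimate or iterate'' is vaguer and would require genuinely new inequalities (concavity and subadditivity both fail for exponent $p-1>1$); the paper's reduction is cleaner and is what you should adopt.
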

\begin{proof}
Pick  $p$ so that  $q=p-1\in(0,1]$,
$\langle\phi^p,\widetilde{\phi}\rangle<\infty$ and $\langle\beta
\phi^p,\widetilde{\phi}\rangle<\infty$.
(If $K:=\sup_D \beta<\infty$ and we only assume that $p>1$, then $\langle
\phi^p,\widetilde{\phi}\rangle=\langle
\phi^{p-1},\phi\widetilde{\phi}\rangle<\infty$ implies $\langle
\phi^r,\widetilde{\phi}\rangle=\langle
\phi^{r-1},\phi\widetilde{\phi}\rangle<\infty$ and $\langle\beta
\phi^r,\widetilde{\phi}\rangle\le K\langle
\phi^{r-1},\phi\widetilde{\phi}\rangle<\infty$ for all $r\in(0,p)$, and so
we can assume that in fact $p\in(1,2].$)

We adopt an approach similar  to the one in \cite{HHa,HHb}.
position $Y_{\sigma_i}$ and
along the spine for $i=1,\dots,n_t$.
original measure $P$
then have the so called
Using the conditional form of Jensen's inequality, the spine decomposition
\eqref{spinedecomp}
and that
$(u+v)^q\leq u^q+v^q$ for $u,v>0$ when $q\in(0,1)$, we find
\begin{eqnarray*}
\phi(x)^{-1} E_{\delta_x}\left[(W^\phi_t)^p\right]&
=&\widetilde{E}_{\delta_x}\left[(W^\phi_t)^q\right]
=\widetilde{E}_{\delta_x}\left\{\widetilde{E}\left[(W^\phi_t)^q\Big|{\mathcal{G}_t}
\right]\right\}\\&\leq&
\widetilde{E}_{\delta_x}\left\{\left[\widetilde{E}\left(W^\phi_t\Big|{\mathcal{G}_t}
\right)\right]^q\right\}
\\&\leq&  \mathbb{E}_{x}^{\phi }\mathbb{L}^{2\beta (Y)}\left(e^{-\lambda_c
qt}\phi(Y_t)^q +\sum_{i=1}^{n_t} e^{-\lambda_c q \sigma_i}
\phi(Y_{\sigma_i})^q
\right)\\
&=& e^{-\lambda_c qt}\mathbb{E}_{x}^{\phi }[\phi(Y_t)^q ]+
\mathbb{E}_{x}^{\phi }\left[\int_0^t e^{-\lambda_c q s}
2\beta(Y_s)\phi(Y_{s})^q{\rm d}s
\right].
\end{eqnarray*}
Call the two expressions on the right hand side the \emph{spine term},
$A(x,t)$, and
the \emph{sum term}, $B(x,t)$, respectively.
Since $Y$ has generator $L+\,a\phi^{-1}(\nabla\phi)\!\cdot \!\nabla$ and
$\langle\phi,\widetilde{\phi}\rangle=1$, $Y$ is ergodic
and $\mathbb{E}_{x}^{\phi }(f(Y_t))\rightarrow\langle f
\phi,\widetilde{\phi}\rangle$.
Then $$\lim_{t\uparrow\infty}e^{\lambda_c
qt}A(t,x)=\lim_{t\uparrow\infty}\mathbb{E}_{x}^{\phi }(\phi(Y_t)^q )
=\langle\phi^p,\widetilde{\phi}\rangle<\infty$$
for all $x\in D$.
For the sum term note that
$\lim_{s\uparrow\infty} \mathbb{E}_{x}^{\phi
}\left(\beta(Y_s)\phi(Y_{s})^q\right)
=\langle\beta \phi^p,\widetilde{\phi}\rangle<\infty$ and so
$\lim_{t\uparrow\infty} B(t,x)<\infty$ for all $x\in D$.
By Doob's inequality, $W^\phi$ is therefore an ${L}^p$-convergent
(uniformly integrable) martingale, as required.
\end{proof}


\subsection{Proof of Theorem \ref{SLLN} along lattice times}
The statement that $E_{\delta_x}(W_{\infty}^{\phi})=1$ follows from Lemma \ref{mgcgce}.
The rest of the proof will be based on the following key lemma.
\begin{lemma}\label{keylemma}
Fix $\delta>0$ and let $B\subset\subset D$. Define
\[
 U_t = e^{-\lambda_c t}
 \langle \phi |_B, X_t\rangle,
\]
where $\phi |_B(x)=\phi(x)\mathbf{1}_{(x\in B)}$.
Then for any non-decreasing sequence  $\{m_n\}_{n\ge 1}$,
\[
\lim_{n\uparrow\infty} |U_{(m_n+n)\delta} - E(U_{(m_n+n)\delta}|\mathcal{F}_{n\delta})|=0,\ P_{\delta_x}-a.s.
\]
\end{lemma}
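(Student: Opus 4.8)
The plan is to control the increment $U_{(m_n+n)\delta}-E(U_{(m_n+n)\delta}\mid\mathcal F_{n\delta})$ via a second-moment (variance) estimate conditioned on $\mathcal F_{n\delta}$, followed by a Borel--Cantelli argument. First I would condition on $\mathcal F_{n\delta}$, using the branching property to write $X_{(m_n+n)\delta}$ as a sum of independent subpopulations, one emanating from each particle alive at time $n\delta$. Writing $N_{n\delta}$ for the configuration $X_{n\delta}=\sum_{u}\delta_{x_u}$ and $T_n:=(m_n+n)\delta$, the branching property gives
\[
\mathrm{Var}\bigl(U_{T_n}\mid\mathcal F_{n\delta}\bigr)
= e^{-2\lambda_c T_n}\sum_{u}\mathrm{Var}_{\delta_{x_u}}\bigl(\langle\phi|_B,X_{(m_n)\delta}\rangle\bigr).
\]
So the key analytic input is an estimate, uniform over starting points $x$ in the relevant region, of the single-ancestor variance $\mathrm{Var}_{\delta_x}\langle\phi|_B,X_s\rangle$ for large $s$.

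For that variance estimate I would use the spine change of measure and the spine decomposition (Theorem~\ref{spineconstruction} and Remark~\ref{spineremark}), exactly as in the proof of Lemma~\ref{mgcgce}, to bound $E_{\delta_x}\langle\phi|_B,X_s\rangle^2$; the square splits into a ``spine term'' and a ``sum term'' as there, and here the compact support restriction to $B$ together with condition (iii)/(iii*) is what produces the decay. Concretely, $E_{\delta_x}\langle\phi|_B,X_s\rangle = e^{\lambda_c s}\langle \phi|_B\,\widetilde\phi,\mathbf 1\rangle^{?}$-type expression modulated by $Q(x,\cdot,s)$; using the bound $|Q(x,y,\zeta(t))|\le\alpha_t\widetilde\phi(y)\phi(x)$ with $e^{-\lambda_c t}\alpha_t\to0$, one gets that $e^{-\lambda_c s}E_{\delta_x}\langle\phi|_B,X_s\rangle$ is close to $\langle g,\widetilde\phi\rangle$ (for the relevant $g$) with an error controlled by $\alpha$, provided $x\in D_t$ and $s=\zeta(t)$. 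The upshot I am aiming for is a bound of the form
\[
\sum_{u}\mathrm{Var}_{\delta_{x_u}}\langle\phi|_B,X_{(m_n)\delta}\rangle
\;\le\; C\,e^{2\lambda_c m_n\delta}\,\varepsilon_n\,\langle\phi,X_{n\delta}\rangle
\]
on the event that all particles at time $n\delta$ lie in $D_{a_{n\delta}}$, where $\varepsilon_n\to0$ comes from $e^{-\lambda_c\zeta(\cdot)}\alpha_{\cdot}$ evaluated along the schedule dictated by $\zeta(a_{n\delta})=\mathcal O(n\delta)$. Combining with the prefactor $e^{-2\lambda_c T_n}=e^{-2\lambda_c n\delta}e^{-2\lambda_c m_n\delta}$ yields
\[
\mathrm{Var}\bigl(U_{T_n}\mid\mathcal F_{n\delta}\bigr)\le C\,\varepsilon_n\,e^{-\lambda_c n\delta}\langle\phi,X_{n\delta}\rangle
= C\,\varepsilon_n\,W^\phi_{n\delta}.
\]

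Finally, since $W^\phi_{n\delta}\to W^\phi_\infty$ almost surely (and is $L^p$-bounded by Lemma~\ref{mgcgce}, hence $\sup_n W^\phi_{n\delta}<\infty$ a.s.), the right-hand side is summable provided $\sum_n\varepsilon_n<\infty$; if $\varepsilon_n$ only tends to $0$ one passes to a subsequence or uses a Chebyshev-plus-truncation argument along the lattice. Then, conditionally on $\mathcal F_{n\delta}$, Chebyshev's inequality gives $P_{\delta_x}\bigl(|U_{T_n}-E(U_{T_n}\mid\mathcal F_{n\delta})|>\eta\bigm|\mathcal F_{n\delta}\bigr)\le \eta^{-2}\mathrm{Var}(U_{T_n}\mid\mathcal F_{n\delta})$, and taking expectations and summing over $n$, the conditional Borel--Cantelli lemma (or ordinary Borel--Cantelli after taking $E_{\delta_x}$) yields $U_{T_n}-E(U_{T_n}\mid\mathcal F_{n\delta})\to0$ a.s. One must also handle the event that not all time-$n\delta$ particles sit inside $D_{a_{n\delta}}$, but condition (iv) in Definition~\ref{class} says precisely that this event occurs only finitely often, $P_{\delta_x}$-a.s., so it contributes nothing in the limit. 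The main obstacle I expect is making the uniform-in-$x$ variance estimate genuinely uniform over the growing domains $D_{a_{n\delta}}$ while simultaneously extracting honest decay from $\alpha$ — i.e. correctly matching the spatial truncation scale $a_{n\delta}$ to the time scale via $\zeta$ so that condition (iii) applies with an error that is summable (or at least $o(1)$) against the random but a.s.-bounded weight $W^\phi_{n\delta}$; the moment bound itself, for $p\in(1,2]$, also needs the $(u+v)^q\le u^q+v^q$ trick rather than a clean $L^2$ computation when $\beta$ is unbounded, which is why the theorem's hypothesis degrades to $p\in(1,2]$ outside the bounded-$\beta$ case.
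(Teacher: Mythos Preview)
Your overall architecture---branching decomposition at time $n\delta$, a moment bound on the centered sum, then Borel--Cantelli---is exactly right, and it matches the paper. But there is a genuine gap in \emph{where you look for the decay}.

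You try to extract a vanishing factor $\varepsilon_n$ from condition (iii) of Definition~\ref{class}, i.e.\ from the error term $\alpha_t$ in $|Q(x,y,\zeta(t))|\le\alpha_t\widetilde\phi(y)\phi(x)$. But condition (iii) is a \emph{first-moment} statement about the transition density $q(x,y,t)$; it tells you nothing directly about $\mathrm{Var}_{\delta_x}\langle\phi|_B,X_s\rangle$. In fact the single-ancestor variance does not decay relative to $e^{2\lambda_c s}$: one has $E_{\delta_x}[(W^\phi_s)^2]=\phi(x)\bigl(A(x,s)+B(x,s)\bigr)$ with $A+B$ bounded in $s$ (for fixed $x$), so the natural bound is of order $e^{2\lambda_c s}\phi(x)\cdot(\text{something depending on }x)$, with no $\varepsilon\to0$. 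Moreover, the lemma as stated requires neither (iii) nor (iv); it holds already under $\mathcal P_p(D)$. Conditions (iii)--(iv) enter only \emph{after} this lemma, when one shows $E(U_{(n+m_n)\delta}\mid\mathcal F_{n\delta})\to I(B)W^\phi_\infty$.

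The decay the paper uses is purely the exponential factor $e^{-(p-1)\lambda_c n\delta}$ that drops out of the $L^p$ martingale-difference inequality. Writing $Z_i=U^{(i)}_{m\delta}-E(U^{(i)}_{m\delta}\mid\mathcal F_{n\delta})$ and applying $E|\sum Z_i|^p\le 2^p\sum E|Z_i|^p$ (valid for $p\in(1,2]$ and conditionally independent, mean-zero summands), one gets
\[
E\bigl(|U_{(m+n)\delta}-E(U_{(m+n)\delta}\mid\mathcal F_{n\delta})|^p\bigr)
\;\le\; 2^{2p}\,e^{-p\lambda_c n\delta}\,E_{\delta_x}\!\sum_{i=1}^{N_{n\delta}}E_{\delta_{X_i}}\bigl[(U_{m\delta})^p\bigr].
\]
Now bound $U_t\le W^\phi_t$ and invoke the spine estimate from Lemma~\ref{mgcgce}, $E_{\delta_y}[(W^\phi_s)^p]\le\phi(y)(A(y,s)+B(y,s))$. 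The many-to-one formula together with the spine change of measure turns the right-hand side into
\[
2^{2p}\,\phi(x)\,e^{-(p-1)\lambda_c n\delta}\,\mathbb E^\phi_x\bigl[A(Y_{n\delta},m\delta)+B(Y_{n\delta},m\delta)\bigr],
\]
and ergodicity of the spine shows the expectation converges to a finite limit. Summability over $n$ then comes for free from $e^{-(p-1)\lambda_c n\delta}$---no $\varepsilon_n$, no condition (iii), no condition (iv). Your closing remark about the $(u+v)^q\le u^q+v^q$ trick is spot on, but it is the $L^p$ inequality for sums of independent centered variables (not just subadditivity) that produces the crucial extra factor $e^{-(p-1)\lambda_c n\delta}$; a plain variance computation with $\varepsilon_n$-chasing misses this.
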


\begin{proof}We will suppress the dependence in $n$ in our notation below and simply write $m$ instead $m_n$. Suppose that $\{X_i :  i=1,...,N_{n\delta}\}$ describes the
configuration of particles at time $n\delta$. Note that we may always write
\begin{equation}
U_{(m+n)\delta} = \sum_{i=1}^{N_{n\delta}}e^{-n\delta\lambda_c}
U^{(i)}_{m\delta}
\label{always write}
\end{equation}
where given $\mathcal{F}_{n\delta}$, the collection $\{U^{(i)}_{m\delta}: i = 1,..., N_{n\delta}\}$ are mutually independent and equal in distribution to $U_{m\delta}$ under $P_{\delta_{X_i}}$ respectively.

By the Borel-Cantelli lemma, it is sufficient to prove that for $x\in D$ and for all $\epsilon>0$,
\[
\sum_{n\geq 1}P_{\delta_x}\left(\left| U_{(m+n)\delta} -
E(U_{(m+n)\delta}|\mathcal{F}_{n\delta})
\right|>\epsilon\right)<\infty.
\]
To this end we first note that,
\[
P_{\delta_x}\left(\left| U_{(m+n)\delta}
- E(U_{(m+n)\delta}|\mathcal{F}_{n\delta}) \right|>\epsilon\right)
\leq
\frac{1}{\epsilon^p} E_{\delta_x}\left(\left|U_{(m+n)\delta}
- E(U_{(m+n)\delta}|\mathcal{F}_{n\delta})  \right|^p\right).
\]
Now recall the following very useful result,
for example see \cite{B} or \cite{CH}:
if $p\in(1,2)$ and $X_i$ are independent random variables with $E(X_i)=0$
(or they are martingale differences), then
\begin{equation}
E\left|\sum_{i=1}^n X_i\right|^p
\leq 2^p \, \sum_{i=1}^n E\left|X_i\right|^p.
\label{basicLpineq}
\end{equation}
Jensen's inequality also implies that for each $n\geq 1$,
$|\sum_{i=1}^n u_i |^p\leq n^{p-1} \sum_{i=1}^n (|u_i|^p)$ and,
in particular,  $|u + v|^p \leq 2^{p-1}(|u|^p + |v|^p)$.

Note that
\[
U_{s+t}-E(U_{s+t}|\mathcal{F}_t)
\sum_{i=1}^{N_t} e^{-\lambda_c t}
\left( U_{s}^{(i)}-E(U_{s}^{(i)}|\mathcal{F}_t) \right)
\]
where conditional on $\mathcal{F}_t$,
$Z_i:=U_{s}^{(i)}-E(U_{s}^{(i)}|\mathcal{F}_t)$ are
independent with $E(Z_i)=0$.
Thus, by \eqref{basicLpineq} and Jensen,
\begin{eqnarray*}
&&
E \left(\left|U_{s+t} - E(U_{s+t}|\mathcal{F}_{t})  \right|^p \,|\, \mathcal{F}_t\right) \\
&&
\leq 2^p \, e^{-p \lambda_c t}
\sum_{i=1}^{N_{t}}
E\left( |U^{(i)}_{s} - E(U_{s}^{(i)}|\mathcal{F}_t) |^p  \,\Big| \, \mathcal{F}_{t}\right) \\
&&
\leq 2^p \, e^{-p \lambda_c t}
\sum_{i=1}^{N_{t}}
E \left( 2^{p-1}\left( |U^{(i)}_{s}|^p
+ | E(U_{s}^{(i)}|\mathcal{F}_t) |^p \right) \,\Big| \, \mathcal{F}_{t}\right)  \\
&&
\leq 2^p\, e^{-p \lambda_c t}
\sum_{i=1}^{N_{t}}
2^{p-1} E \left(  |U^{(i)}_{s}|^p
+  E(|U_{s}^{(i)}|^{p}|\mathcal{F}_t)  \,\Big| \, \mathcal{F}_{t}\right)  \\
&&
\leq 2^{2p}\, e^{-p \lambda_c t}
\sum_{i=1}^{N_{t}}
E\left( |U^{(i)}_{s}|^p \Big|\mathcal{F}_t\right)
\end{eqnarray*}

Then, as a consequence of the previous estimate, we have that
\begin{eqnarray}
&&\sum_{n\geq 1}E_{\delta_x}\left(\left|U_{(m+n)\delta} -
E(U_{(m+n)\delta}|\mathcal{F}_{n\delta})  \right|^p\right)\notag\\
&&\leq 2^{2p}   \sum_{n\geq 1}e^{-\lambda_c n\delta p}E_{\delta_x}\left(\sum_{i=1}^{N_{n\delta}}
E_{\delta_{X_i}}[(U_{m\delta})^p]
\right)
\label{borrow later}
\end{eqnarray}
Recalling the definition of the terms $A(x,t)$ and $B(x,t)$ from the proof of
Theorem \ref{mgcgce}(ii) and trivially noting that $U_t\leq W^\phi_t$, we have
\begin{eqnarray*}
&&\sum_{n\geq 1}E_{\delta_x}\left(\left|U_{(m+n)\delta} -
E(U_{(m+n)\delta}|\mathcal{F}_{n\delta})  \right|^p\right)\\
&&\leq
2^{2p}   \sum_{n\geq 1}e^{-\lambda_c n\delta p}
E_{\delta_x}\left( \sum_{i=1}^{N_{n\delta}}
E_{\delta_{X_i}}[(W^\phi_{m\delta})^p]
\right) \\
&&\leq
 2^{2p}\sum_{n\geq 1}
E_{\delta_x}\left(
\sum_{i=1}^{N_{n\delta}} e^{-p \lambda_c n\delta}
\phi(X_i)(A(X_i, m\delta) +B(X_i,m\delta))
\right) \\
&&=2^{2p}\sum_{n\geq 1}
\phi(x)e^{-q \lambda_c \delta n}\mathbb{E}_x^\phi\left(
A(Y_{n\delta},m\delta) +B(Y_{n\delta},m\delta) \right)
\end{eqnarray*}
where we have used the `one-particle picture' (equation (\ref{opp})) and the spine change of measure
at \eqref{spinecofm}.
Since the spine $Y$ is Markovian and ergodic under $\mathbb{P}_x^{\phi}$, we know that
\[
\mathbb{E}_x^\phi\left[A(Y_{n\delta},m\delta)\right] = e^{-\lambda_c q m\delta} \mathbb{E}_x^\phi (\phi(Y_{(m+n)\delta})^q).
\]Denoting $m_{\infty}:=\lim_{n\to\infty}m_n$, the latter converges to $e^{-q\lambda_c m_{\infty}\delta}\langle \phi^p,\widetilde{\phi}\rangle$ (which will be zero if $m_{\infty} =\infty$) as $n\uparrow\infty$.
Recall the assumption that $\langle \beta \phi^p,\widetilde{\phi}\rangle<\infty$.  Similarly as before, we have that
\[
\mathbb{E}_x^\phi\left[B(Y_{n\delta},m\delta)\right] = 2\int_0^{m\delta}e^{-\lambda_c qs}
 \mathbb{E}_x^\phi (\beta(Y_{s+n\delta})\phi(Y_{s+n\delta})^q){\rm d}s
\]
which has a finite limit equal to $2\int_0^{m_{\infty}\delta} e^{-\lambda_c s} \langle \beta \phi^p,\widetilde{\phi}\rangle {\rm d}s$ as $n\uparrow\infty$.
These facts are enough to conclude that the last sum remains
finite to complete the Borel-Cantelli argument.
\end{proof}

We now complete the proof of Theorem \ref{SLLN} along lattice times.
Assume that $L+\beta\in\mathcal{P}^*_p$ for some
$p>1$. Recall now that $I(B):=\int_B\phi(y)\widetilde{\phi}(y){\rm
d}(y)<1$ and  note that, similarly to (\ref{always write}),
\begin{eqnarray*}
&&E(U_{t+s}|\mathcal{F}_t)\\ &&=\sum_{i=1}^{N_t} e^{-\lambda_c
t}\phi(X_i)p(X_i,B,s)
=\sum_{i=1}^{N_t}e^{-\lambda_c (t+s)} \int_B\phi(y)q(X_i,y,s){\rm d}(y)\\
&&=\sum_{i=1}^{N_t}e^{-\lambda_c t} \phi(X_i)\,I(B)
+\sum_{i=1}^{N_t}e^{-\lambda_c (t+s)} \int_B\phi(y)Q(X_i,y,s){\rm d}(y)\\
&&=I(B) W^\phi_t +\sum_{i=1}^{N_t}e^{-\lambda_c (t+s)}
\int_B\phi(y)Q(X_i,y,s){\rm d}(y)=:I(B) W^\phi_t + \Theta(t,s).
\end{eqnarray*}
Let us replace now $t$ by $n\delta$ and $s$ by $m_n\delta$, where $$m_n:=\zeta(a_{n\delta})/\delta,$$ and $\zeta$ is the function appearing in the definition of $\mathcal{P}_p^*$. (Although we do not need it yet, we note that, according to (iv) in Definition \ref{class}, one has $m_n\le Kn$ as $n\to\infty$, where $K>0$ does not depend on $\delta$.) Then $$E(U_{(n+m_n)\delta}|\mathcal{F}_{n\delta})=I(B)W^\phi_{n\delta} + \Theta(n\delta,m_n\delta)=:I(B)W^\phi_{n\delta} +\Upsilon(n).$$
Define the event $$A_n:=\{\text{supp}(X_{n\delta})\not\subset D_{a_{n\delta}}\}.$$ Because of the choice of $m_n$ and since $I(B)<1$, we have
\begin{eqnarray*}
&&|\Upsilon(n)|\leq\\&& \sum_{i=1}^{N_{n\delta}}e^{-\lambda_c n\delta} \phi(X_i)e^{-\lambda_c m_n\delta}
\alpha_{m_n\delta}+|\Upsilon(n)| \mathbf{1}_{A_{n}}=
e^{-\lambda_c m_n\delta}\alpha_{m_n\delta}\, W^\phi_{n\delta}+|\Upsilon(n)| \mathbf{1}_{A_{n}}.
\end{eqnarray*}
Since, according to Definition \ref{class}(iv), $\lim_{n\to\infty}\mathbf{1}_{A_{n}}= 0,\ P-$a.s., therefore
$$\limsup_{n\uparrow\infty}|\Upsilon(n)|\le\lim_{n\uparrow\infty}
 e^{-\lambda_c m_n\delta}\alpha_{m_n\delta}\, W^\phi_{n\delta}=0\ \ \ P_{\delta_x}-a.s.,$$ and so
\begin{equation}\label{close}
\lim_{n\uparrow\infty}\left|E_{\delta_x}(U_{(n+m_n)\delta}|\mathcal{F}_{n\delta})
-\langle\phi |_B,\widetilde{\phi}dx \rangle W^\phi_\infty
\right|=0\ \ \  P_{\delta_x}-a.s.
\end{equation}
Since $\mathrm{Span}\{\phi |_B,\ B\subset\subset D\}$ is
dense in $C^+_c$, the result  for lattice times follows  by standard arguments along
with Lemma \ref{keylemma}. $\hfill\square$
\subsection{Replacing lattice times with continuous time}
 The following lemma is  enough to conclude the convergence in Theorem \ref{SLLN} (see the remark after the lemma).  It upgrades convergence along lattice times to the full sequence of times and is based on the idea to be found in Lemma 8 of \cite{AH76}.
\begin{lemma}\label{upgrade}
Suppose that for some $p>1$,  $\langle \phi^p, \widetilde{\phi}  \rangle<\infty$ and for all $\delta>0$ it is true that  for all $g\in C_c^+(D)$ and $x\in D$
\[
\lim_{n%
\uparrow \infty }e^{-\lambda _{c}n\delta}
\langle g,X_{n\delta}\rangle
=\langle g,\widetilde{\phi }\rangle W_{\infty }^{\phi }\
\qquad P_{\delta_x}-a.s.,
\]
then the same result holds  when $n\delta$ is replaced by $t$ and $\lim\nolimits_{n\uparrow \infty }$ by $\lim\nolimits_{t\uparrow \infty }$.
\end{lemma}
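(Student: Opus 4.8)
The plan is to control, for each fixed $\delta>0$, the oscillation of $e^{-\lambda_c t}\langle g, X_t\rangle$ across an interval $[n\delta,(n+1)\delta)$ and then let $\delta\downarrow0$. Fix $g\in C_c^+(D)$ and $x\in D$, put $L:=\langle g,\widetilde\phi\rangle W^\phi_\infty$, and for $t=n\delta+s$ with $0\le s<\delta$ write
\[
\bigl|e^{-\lambda_c t}\langle g, X_t\rangle - e^{-\lambda_c n\delta}\langle g, X_{n\delta}\rangle\bigr|\le e^{-\lambda_c n\delta}\,\Omega_n(\delta) + (1-e^{-\lambda_c\delta})\,e^{-\lambda_c n\delta}\langle g, X_{n\delta}\rangle ,
\]
where $\Omega_n(\delta):=\sup_{0\le s\le\delta}|\langle g, X_{n\delta+s}\rangle-\langle g, X_{n\delta}\rangle|$. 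Since $\mathrm{supp}\,g\subset\subset D$ we have $g\le c_g\phi$ with $c_g:=\|g\|_\infty/\inf_{\mathrm{supp}\,g}\phi<\infty$, so the second term is at most $(1-e^{-\lambda_c\delta})c_g W^\phi_{n\delta}\to(1-e^{-\lambda_c\delta})c_g W^\phi_\infty$ a.s. Using the hypothesis $e^{-\lambda_c n\delta}\langle g, X_{n\delta}\rangle\to L$ a.s.\ and a.s.\ convergence of the martingale $W^\phi_t\to W^\phi_\infty$, it is enough to prove: \emph{for each $\delta>0$ there is $\rho(\delta)$ with $\rho(\delta)\downarrow0$ as $\delta\downarrow0$ and $\limsup_n e^{-\lambda_c n\delta}\Omega_n(\delta)\le\rho(\delta)W^\phi_\infty$ $P_{\delta_x}$-a.s.} Indeed, applying this with $\delta=1/k$, $k\in\mathbb N$, and intersecting the countably many a.s.\ events gives $\limsup_t|e^{-\lambda_c t}\langle g, X_t\rangle-L|\le(\rho(1/k)+(1-e^{-\lambda_c/k})c_g)W^\phi_\infty$ for every $k$, whence this limsup is $0$. (The passage from a countable sup-norm-dense family of $g$'s, and fixed $x$, to all $g\in C_c^+(D)$ and all $x$ is routine via $\limsup_t|e^{-\lambda_c t}\langle g_1-g_2, X_t\rangle|\le c\,\|g_1-g_2\|_\infty W^\phi_\infty$ for $g_1,g_2$ supported in a common compact; this also lets us assume below that $g\in C_c^\infty(D)$.)

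To estimate $\Omega_n(\delta)$ I would condition on $\mathcal F_{n\delta}$: with $\{X_i\}_{i\le N_{n\delta}}$ the configuration at time $n\delta$, the branching property as in \eqref{always write} gives $\Omega_n(\delta)\le\sum_{i=1}^{N_{n\delta}}R^{(i)}$, where, given $\mathcal F_{n\delta}$, the $R^{(i)}$ are independent with $R^{(i)}$ distributed as $R:=\sup_{0\le s\le\delta}|\langle g, X_s\rangle-\langle g, X_0\rangle|$ under $P_{\delta_{X_i}}$. Split $R^{(i)}=\bar R^{(i)}+E[R^{(i)}\mid\mathcal F_{n\delta}]$ into a conditionally independent mean-zero part and a mean part, and fix $p\in(1,2]$ with $\langle\phi^p,\widetilde\phi\rangle<\infty$ (such $p$ exists because $p\mapsto\langle\phi^p,\widetilde\phi\rangle$ is log-convex with value $1$ at $p=1$). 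For the centred part, \eqref{basicLpineq} for conditionally independent martingale differences, Jensen, and the one-particle picture \eqref{opp} give
\[
E_{\delta_x}\Bigl|\sum_i\bar R^{(i)}\Bigr|^p\le 2^{2p}\,S_{n\delta}(r_p)(x),\qquad r_p(y):=E_{\delta_y}[R^p] .
\]
Granting the key bound $r_p\le c_p(\delta)\phi^p$ below, and using the spine change of measure \eqref{spinecofm} with ergodicity of the $\phi$-transformed motion, $e^{-\lambda_c n\delta}S_{n\delta}(\phi^p)(x)=\phi(x)\,\mathbb E_x^\phi[\phi(Y_{n\delta})^{p-1}]\to\phi(x)\langle\phi^p,\widetilde\phi\rangle<\infty$, so (here $p>1$ and $\lambda_c>0$ are crucial) $\sum_n e^{-p\lambda_c n\delta}E_{\delta_x}|\sum_i\bar R^{(i)}|^p<\infty$; Markov and Borel--Cantelli then yield $e^{-\lambda_c n\delta}\sum_i\bar R^{(i)}\to0$ a.s. For the mean part, granting $r_1\le c_1(\delta)\phi$ with $r_1(y):=E_{\delta_y}[R]$,
\[
e^{-\lambda_c n\delta}\sum_i E[R^{(i)}\mid\mathcal F_{n\delta}]=e^{-\lambda_c n\delta}\langle r_1, X_{n\delta}\rangle\le c_1(\delta)\,e^{-\lambda_c n\delta}\langle\phi, X_{n\delta}\rangle = c_1(\delta)\,W^\phi_{n\delta}\longrightarrow c_1(\delta)\,W^\phi_\infty
\]
a.s. Taking $\rho(\delta):=c_1(\delta)$ gives the a.s.\ bound on $\limsup_n e^{-\lambda_c n\delta}\Omega_n(\delta)$ demanded above, \emph{provided} $c_1(\delta)\downarrow0$ as $\delta\downarrow0$ and $c_p(\delta)<\infty$ for each $\delta$.

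The remaining task --- and the real content of the lemma --- is thus the uniform short-time moment estimate $c_1(\delta):=\sup_{y\in D}\phi(y)^{-1}E_{\delta_y}[\sup_{s\le\delta}|\langle g, X_s\rangle-g(y)|]\to0$ as $\delta\downarrow0$, and $c_p(\delta):=\sup_{y\in D}\phi(y)^{-p}E_{\delta_y}[R^p]<\infty$. With $g\in C_c^\infty(D)$, Dynkin's formula gives $\langle g, X_s\rangle=g(Y_0)+\int_0^s\langle(L+\beta)g, X_u\rangle\,\mathrm du+M_s$, $M$ a martingale with angle bracket $\int_0^s\langle h, X_u\rangle\,\mathrm du$, $h:=a\nabla g\cdot\nabla g+\beta g^2$; since $|(L+\beta)g|,h\in C_c^+(D)$ (the coefficients of $L$ and $\beta$ are bounded on $\mathrm{supp}\,g$), they are $\le c\phi$, and $E_{\delta_y}\langle\psi, X_u\rangle=S_u\psi(y)\le c\,e^{\lambda_c u}\phi(y)$ for such $\psi$. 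For $y$ in a fixed compact neighbourhood of $\mathrm{supp}\,g$ one combines uniform continuity with Doob's and the Burkholder--Davis--Gundy inequalities to get $E_{\delta_y}[R]=o_\delta(1)\phi(y)$ and $E_{\delta_y}[R^p]\le C(\delta)\phi(y)^p$; for $y$ outside this neighbourhood $g(y)=0$, so $R=\sup_{s\le\delta}\langle g, X_s\rangle\le\|g\|_\infty\sup_{s\le\delta}X_s(\mathrm{supp}\,g)$, and one must control $E_{\delta_y}$ of this via the first-moment (Feynman--Kac) identity $S_u\mathbf 1_{\mathrm{supp}\,g}(y)=\mathbb E_y[e^{\int_0^u\beta(Y_r)\,\mathrm dr};\,Y_u\in\mathrm{supp}\,g]$ together with the $\phi$-transform. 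This far-field estimate is the main obstacle: when $a,b,\beta$ are bounded (in particular for $D$ bounded with bounded coefficients) the short-time oscillation estimate holds uniformly over all of $D$ with no splitting and the proof is routine, but in general one must quantify ``a particle cannot reach $\mathrm{supp}\,g$ in a short time'' by a Ventcel--Freidlin / Feynman--Kac large-deviation bound weighed against the growth of $\phi$ (and $\beta$) --- in the spirit of the model-by-model analysis of Section \ref{Examples} --- showing that $\{Y$ reaches $\mathrm{supp}\,g$ by time $\delta\}$ has, even after weighting by $e^{\int\beta}$, probability $o(\phi(y))$ uniformly in $y$ and tending to $0$ uniformly as $\delta\downarrow0$; product-$p$-criticality $\langle\phi^p,\widetilde\phi\rangle<\infty$ is exactly what keeps both $e^{-\lambda_c n\delta}S_{n\delta}(\phi^p)(x)$ bounded and these weighted probabilities summable against $\phi$. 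Once the moment estimate is in hand, the first two steps close the argument.
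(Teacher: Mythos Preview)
Your approach is genuinely different from the paper's and, as you yourself flag, it contains a real gap. The entire argument hinges on the uniform short-time estimates
\[
c_1(\delta)=\sup_{y\in D}\phi(y)^{-1}E_{\delta_y}\Bigl[\sup_{s\le\delta}|\langle g,X_s\rangle-g(y)|\Bigr]\xrightarrow[\delta\downarrow0]{}0,
\qquad
c_p(\delta)=\sup_{y\in D}\phi(y)^{-p}E_{\delta_y}[R^p]<\infty,
\]
which you do not prove. For $y$ far from $\mathrm{supp}\,g$ you need to control $E_{\delta_y}[\sup_{s\le\delta}\langle g,X_s\rangle]$, and the crude bound $g\le c_g\phi$ together with Doob on $W^\phi$ gives only a constant multiple of $\phi(y)$ that does \emph{not} vanish as $\delta\downarrow0$ (and even finiteness of the $L^p$ bound on $W^\phi$ already needs $\langle\beta\phi^p,\widetilde\phi\rangle<\infty$, which the lemma does not assume). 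Getting $c_1(\delta)\to0$ requires a quantitative ``cannot reach $\mathrm{supp}\,g$ in short time'' estimate that competes with possibly unbounded $\beta$ and the growth of $\phi$; as you say, this is model-dependent analysis in the spirit of Section~\ref{Examples}, and it is not a consequence of the sole hypothesis $\langle\phi^p,\widetilde\phi\rangle<\infty$. So the argument, as written, does not establish the lemma under its stated assumptions.

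The paper sidesteps this difficulty by an asymmetric trick: it never bounds the oscillation. For $B\subset\subset D$ and $t\in[n\delta,(n+1)\delta)$ it writes the \emph{lower} bound
\[
e^{-\lambda_c t}\langle\phi|_B,X_t\rangle\ \ge\ \frac{e^{-\delta}}{1+\epsilon}\sum_{i=1}^{N_{n\delta}}e^{-\lambda_c n\delta}\phi(X_i)\,\Xi^{\delta,\epsilon}_B(X_i),
\]
where $\Xi^{\delta,\epsilon}_B(X_i)\in\{0,1\}$ indicates that the whole subtree rooted at $X_i$ stays in $\{y\in B:\phi(y)>(1+\epsilon)^{-1}\phi(X_i)\}$ on $[0,\delta]$. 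Because $|\phi(X_i)\Xi^{\delta,\epsilon}_B(X_i)|^p\le\phi(X_i)^p$, the same $L^p$/Borel--Cantelli machinery as in Lemma~\ref{keylemma} needs only $\sum_n e^{-\lambda_c n\delta q}\mathbb{E}^\phi_x[\phi(Y_{n\delta})^q]<\infty$, i.e.\ exactly $\langle\phi^p,\widetilde\phi\rangle<\infty$, and no far-field estimate whatsoever. This yields $\liminf_t e^{-\lambda_c t}\langle\phi|_B,X_t\rangle\ge\langle\phi|_B,\widetilde\phi\rangle W^\phi_\infty$ for every Borel $B\subseteq D$, and the matching $\limsup$ bound then comes for free from the identity $e^{-\lambda_c t}\langle\phi|_B,X_t\rangle=W^\phi_t-e^{-\lambda_c t}\langle\phi|_{D\setminus B},X_t\rangle$ and the a.s.\ convergence $W^\phi_t\to W^\phi_\infty$. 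The point is that only a one-sided estimate is ever needed, and the summands that produce it are uniformly dominated by $\phi(X_i)$; this is what lets the proof go through under the minimal hypothesis, whereas your oscillation bound would require substantially more.
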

\begin{remark} \rm   Recall that we assumed that $\zeta(a_t)=\mathcal{O}(t)$ as $t\to\infty$, and so referring to the previous subsection, $m_n=\zeta(a_{n\delta})/\delta\le Kn$ with some $K>0$ which does not depend on $\delta$.
 In fact, by possibly further increasing the function $a$, we can actually take $\zeta(a_{t})= Kt$ and $m_n=Kn$. Then,  from the previous subsection we already know that \[
\lim\nolimits_{n%
\uparrow \infty }e^{-\lambda _{c}({K}+1)n\delta}
\langle g,X_{({K}+1)n\delta}\rangle
=\langle g,\widetilde{\phi }\rangle W_{\infty }^{\phi }\
\qquad P_{\delta_x}-a.s.
\]
Thus the assumption in Lemma \ref{upgrade} is indeed satisfied (write $\delta':=\delta({K}+1)$). $\hfill\diamond$
\end{remark}
\begin{proof}
First suppose that $B\subset\subset D$ and for each $x\in D$ and $\epsilon>0$, define
\[
B^\epsilon(x) = \{y\in B : \phi(y)>(1+\epsilon)^{-1} \phi(x)\}.
\]
Note in particular that $x\in B^\epsilon(x)$ if and only if $x\in B$. Next define for each $\delta>0$
\[
\Xi^{\delta, \epsilon}_B(x) = \mathbf{1}_{\{\text{supp} (X_t)\subset B^\epsilon(x) \text{ for all }t\in[0,\delta]\}}
\]
and let $\xi^{\delta, \epsilon}_B(x) =E_{\delta_x}(\Xi^{\delta, \epsilon}_B(x) )$. An important feature of the latter quantity in the forthcoming proof is that $\xi^{\delta, \epsilon}_B(x)\rightarrow \mathbf{1}_{B}(x)$ as $\delta\downarrow 0$.
With this notation we now note the crucial estimate
\[
e^{-\lambda_c t}\langle \phi|_B, X_t \rangle \geq \frac{e^{-\delta}}{(1+\epsilon)} \sum_{i=1}^{N_{n\delta}}   e^{-\lambda_c n\delta}\phi(X_i)\Xi^{\delta,\epsilon}_B(X_i).
\]
Note that the sum on the right hand side is of the form (\ref{always write}) where now $U_{(m+n)\delta}$ is played by the right hand side above and $U^{(i)}_{m\delta}$ is played by the role of $\phi(X_i)\Xi^{\delta,\epsilon}_B(X_i).$ Similar  $L^p$ estimates to those found in Lemma \ref{keylemma} show us that the estimate (\ref{borrow later}) is still valid in the setting here and hence
\begin{eqnarray*}
&&\sum_{n\geq 1}E_{\delta_x}\left(\left|U_{(m+n)\delta} -
E(U_{(m+n)\delta}|\mathcal{F}_{n\delta})  \right|^p\right)\notag\\
&&\leq
2^{2p}   \sum_{n\geq 1}e^{-\lambda_c n\delta p}E_{\delta_x}\left(\sum_{i=1}^{N_{n\delta}}
\phi(X_i)^p\xi^{\delta,\epsilon}_B(X_i)
\right).
\end{eqnarray*}
However, with $q=p-1$, the righthand side can  again be upper estimated by
\[
2^{2p}   \sum_{n\geq 1}e^{-\lambda_c n\delta p}E_{\delta_x}
\langle \phi^p, X_{n\delta}\rangle
=2^{2p}   \sum_{n\geq 1}e^{-\lambda_c n\delta q}\mathbb{E}^\phi_x
(\phi(Y_{n\delta})^q)<\infty
\]
where the equality follows by equation (\ref{opp}), and the fact that the final sum is finite, follows by the ergodicity of $\mathbb{P}^\phi_x$ and the assumption that $\langle\phi^p,\widetilde{\phi}\rangle<\infty$.

We may now  appeal to the Borel-Cantelli Lemma to deduce that
\[
\lim_{n\uparrow\infty}\left|\sum_{i=1}^{N_{n\delta}}   e^{-\lambda_c n\delta}\phi(X_i)\Xi^{\delta,\epsilon}_B(X_i)
-
e^{-\lambda_c n\delta} \langle  \phi\xi^{\delta,\epsilon}_B, X_{n\delta}\rangle
\right|=0
\]
$P_{\delta_x}$-almost surely and hence, using the fact that the Strong Law of Large Numbers has been proved already for $n\delta$-sequences,
\[
\liminf_{t\uparrow\infty}e^{-\lambda_c t}\langle \phi|_B, X_t\rangle \geq \frac{e^{-\delta}}{(1+\epsilon)} \langle\phi \xi^{\delta,\epsilon}_B, \widetilde{\phi}\rangle W^\phi_\infty.
\]
Taking $\delta\downarrow 0$ reveals that $\langle\phi \xi^{\delta,\epsilon}_B, \widetilde{\phi}\rangle\rightarrow \langle\phi |_B, \widetilde{\phi}\rangle$ in the lower estimate above, and hence subsequently taking $\epsilon\downarrow 0$ gives us
\[
\liminf_{t\uparrow\infty}e^{-\lambda_c t}\langle \phi|_B, X_t\rangle \geq \langle\phi |_B, \widetilde{\phi}\rangle W^\phi_\infty.
\]

Recall that this estimate was computed for the case that $B\subset\subset D$. Suppose now that $B\subseteq D$ (not necessarily bounded). Then there exists an increasing sequence of compactly embedded domains in $B$, say $\{B_n : n\geq 1\}$, such that $\bigcup_{n\geq 1}B_n  =B$. We may then note that for each $n\geq 1$
\[
\liminf_{t\uparrow\infty}e^{-\lambda_c t}\langle \phi|_B, X_t\rangle \geq
\liminf_{t\uparrow\infty}e^{-\lambda_c t}\langle \phi|_{B_n}, X_t\rangle \geq
\langle\phi |_{B_n}, \widetilde{\phi}\rangle W^\phi_\infty,
\]
and hence, as $n$ is arbitrary,
\[
\liminf_{t\uparrow\infty}e^{-\lambda_c t}\langle \phi|_B, X_t\rangle \geq
\langle\phi |_{B}, \widetilde{\phi}\rangle W^\phi_\infty,\ \ P_{\delta_x}-a.s.
\]
Now that we have a tight lower estimate for the liminf for arbitrary Borel $B\subseteq D$, we shall look at the limsup, also for arbitrary Borel $B\subseteq D$. Using the normalization $\langle\phi, \widetilde{\phi}\rangle=1$, one has
\[
\limsup_{t\uparrow\infty}e^{-\lambda_c t}\langle \phi|_B, X_t\rangle =
W^\phi_\infty - \liminf_{t\uparrow\infty}e^{-\lambda_c t}\langle \phi|_{D\backslash B}, X_t\rangle \leq
\langle\phi |_{B}, \widetilde{\phi}\rangle W^\phi_\infty,\ \ P_{\delta_x}-a.s.
\]
This, together with the  liminf result, yields
\[
\lim_{t\uparrow\infty}e^{-\lambda_c t}\langle \phi|_B, X_t\rangle =
\langle\phi |_{B}, \widetilde{\phi}\rangle W^\phi_\infty,\ \ P_{\delta_x}-a.s.
\]

Then, just like for lattice times, a straightforward measure theoretic consideration shows that $\phi |_B$ can be replaced by an arbitrary test function $g\in C_c^+(D)$, completing the proof.
\end{proof}
\subsection{Proof of Theorem \ref{WLLN}}
\begin{proof}The last part of the theorem is merely a consequence of Lemma \ref{mgcgce}. For any $g\in C_c^+(D)$ define for each $x\in D$ the function $h_s(x) = \mathbb{E}_x^\phi(g(\xi_s))$, and note that, uniformly in $x$ and $s$, the function $h_s(x)$ is bounded. Now define $U_t [g]= e^{-\lambda_c t}\langle g\phi, X_t\rangle$ and observe that, just as in Theorem \ref{SLLN}, one has
\[
U_{t+s}[g] = \sum_{i=1}^{N_t}e^{-\lambda_c t} U^{(i)}_s[g],
\]
where by  (\ref{opp}), $$E(U^{(i)}_s[g]|\mathcal{F}_t) = \phi(X_i(t))h_s(X_i(t)).$$
Next, note from the Markov property at $t$ and the proof \footnote{Note that even though $U_t$ is defined differently, we still have martingale differences and the key upper estimate of $U_t\leq \text{const}\cdot W_t^\phi$ still holds.} of Theorem \ref{SLLN} that for fixed  $s$ and $\epsilon>0$
\[
\lim_{t\uparrow\infty} P_{\delta_x}\left(\left| U_{t+s } [g]- E(U_{t+s }[g] |\mathcal{F}_t)\right|>\epsilon\right) =0.
\]
By the Markov inequality, for each $\epsilon>0$,
\[
P_{\delta_x}\left(\left|E(U_{t+s }[g]|\mathcal{F}_t) - \langle\phi g, \widetilde{\phi}\rangle W^\phi_t\right|>\epsilon\right)\leq \frac{1}{\epsilon}E_{\delta_x} \left|E(U_{t+s }[g]|\mathcal{F}_t) - \langle\phi g, \widetilde{\phi}\rangle W^\phi_t\right|.
\]
However, making use of the many-to-one identity, the right hand side above respects the following inequalities,
\begin{eqnarray*}
&&E_{\delta_x} \left|E(U_{t+s }[g]|\mathcal{F}_t) - \langle\phi g, \widetilde{\phi}\rangle W^\phi_t\right|\\
&&\leq E_{\delta_x}\left(\sum_{i=1}^{N_t} e^{-\lambda_c t}\phi(X_i(t)) |h_{s }(X_i(t)) - \langle\phi g,\widetilde{\phi}\rangle|\right)\\
&&=\phi(x) \mathbb{E}_x^\phi |h_{s }(\xi_t)  - \langle\phi g,\widetilde{\phi}\rangle|.
\end{eqnarray*}
Hence taking limits as $t\uparrow\infty$, and using ergodicity of the spine as well as the uniform boundedness of  $h_s(x)$, we have
\[
\lim_{t\uparrow\infty}P_{\delta_x}\left(\left|E(U_{t+s }[g]|\mathcal{F}_t) - \langle\phi g, \widetilde{\phi}\rangle W^\phi_t\right|>\epsilon\right) \leq \frac{\phi(x)}{\epsilon}
\langle|h_s - \langle\phi g,\widetilde{\phi}\rangle| , \phi\widetilde{\phi}\rangle.
\]
Finally, noting that $\lim_{s\uparrow\infty}h_s(x) =  \langle\phi g,\widetilde{\phi}\rangle$ and again the uniform boundedness of $h_s(x)$,  we have by dominated convergence that
\[
\lim_{s\uparrow\infty}\lim_{t\uparrow\infty}P_{\delta_x}\left(\left|E(U_{t+s}[g]|\mathcal{F}_t) - \langle\phi g, \widetilde{\phi}\rangle W^\phi_t\right|>\epsilon\right) \leq \frac{\phi(x)}{\epsilon} \langle\lim_{s\uparrow\infty}|h_s - \langle\phi g,\widetilde{\phi}\rangle| , \phi\widetilde{\phi}\rangle =0.
\]
To finish, we need an epsilon-delta argument. First note that, under the given conditions, the martingale $W^\phi$ converges in the $L^p$ norm and hence, in particular, converges in probability to its limit. Next, fix $\epsilon>0$. Then
\begin{eqnarray*}
&&P_{\delta_x}\left(\left| U_{t+s }[g] - \langle\phi g, \widetilde{\phi}\rangle W^\phi_\infty\right|>\epsilon\right)\\
&&\leq
P_{\delta_x}\left(\left| U_{t+s } [g]- E(U_{t+s}[g]|\mathcal{F}_t)\right|+\left| E(U_{t+s}[g]|\mathcal{F}_t)-\langle\phi g, \widetilde{\phi}\rangle W^\phi_t\right|\right.\\
&&\hspace{7cm}\left.+\langle\phi g, \widetilde{\phi}\rangle \left|W^\phi_t - W^\phi_\infty \right|
>\epsilon\right) \\
&&\leq P_{\delta_x}\left(\left| U_{t+s } [g]- E(U_{t+s}[g]|\mathcal{F}_t)\right|>\frac{\epsilon}{3}\right) +
P_{\delta_x}\left(\left| E(U_{t+s}[g]|\mathcal{F}_t)-\langle\phi g, \widetilde{\phi}\rangle W^\phi_\infty\right|>\frac{\epsilon}{3}\right)\\
&&\hspace{2cm}+P_{\delta_x}\left(  \langle\phi g, \widetilde{\phi}\rangle \left|W^\phi_t - W^\phi_\infty \right| > \frac{\epsilon}{3} \right).
\end{eqnarray*}
Now for each $\delta>0$, by choosing $s$ sufficiently large, we can take limits on the left hand side above to deduce
\[
\lim_{t\uparrow\infty}P_{\delta_x}\left(\left| U_{t+s }[g] - \langle\phi g, \widetilde{\phi}\rangle W^\phi_\infty\right|>\epsilon\right)<\delta.
\]
So, in fact
\[
\lim_{t\uparrow\infty}P_{\delta_x}\left(\left| U_{t } [g]- \langle\phi g, \widetilde{\phi}\rangle W^\phi_\infty\right|>\epsilon\right)=0.
\]
In particular,  taking  $g =  \kappa/\phi$ for any $\kappa\in C_c^+(D)$, yields the convergence (in probability) of $\exp \{-\lambda _{c}t\}X_{t}$  in the vague topology of measures.
\end{proof}
\textbf{Acknowledgement.} This research has been completed during the first author's visit to Bath and was funded by an EPSRC grant (EP/E05448X/1). The generous support of EPSRC is gratefully acknowledged. The first author also expresses his thanks to the University of Bath for the hospitality during his stay.

\end{document}